\newenvironment{proof}[1][Proof]{\noindent\textbf{#1.} }{\ \rule{0.5em}{0.5em}}
\newtheorem{De}{Definition}[section]
\newtheorem{Th}[De]{Theorem}
\newtheorem{Pro}[De]{Proposition}
\newtheorem{Le}[De]{Lemma}
\newtheorem{Co}[De]{Corollary}
\newtheorem{Rem}[De]{Remark}
\newtheorem{Ex}[De]{Example}
\newtheorem{note}[De]{Notation}
\newcommand{\im}{{\sf Im}}
\newcommand{\Id}{{\sf id}}
\newcommand{\inc}{{\sf inc}}
\newcommand{\Ker}{{\sf Ker}}
\newbox\pullbackbox
\def\id{\operatorname{id}}
\newcommand{\Z}{{\cal{Z}}}
\begin{document}

\centerline{\bf On some properties preserved by the non-abelian tensor product of Hom-Lie algebras}

\bigskip
\centerline{\bf J. M. Casas$^{(1)}$, E. Khmaladze$^{(2)}$ and N. Pacheco Rego$^{(3)}$}

\bigskip

\centerline{$^{(1)}$ Dpto. Matemática Aplicada I, Universidad de Vigo,  E. E. Forestal}
\centerline{Campus Universitario A Xunqueira, 36005 Pontevedra, Spain}
\centerline{ {E-mail address}: jmcasas@uvigo.es}

\bigskip
\centerline{$^{(2)}$ The University of Georgia and A. Razmadze Math. Inst. of Tbilisi State University,}
\centerline{Kostava Str. 77a, 0171 Tbilisi, Georgia}
\centerline{E-mail address:  e.khmal@gmail.com}

\bigskip
\centerline{$^{(3)}$ IPCA, Departamento de Ciências, Campus do IPCA, Lugar do Aldão,}
\centerline{ 4750-810 Vila Frescainha, S. Martinho, Barcelos, Portugal}
\centerline{ {E-mail address}: natarego@gmail.com}
\bigskip

\date{}

\bigskip \bigskip \bigskip

{\bf Abstract:} We study some properties of the non-abelian tensor product of Hom-Lie algebras concerning the preservation of products and quotients,  solvability and nilpotency,  and describe compatibility with the universal central extensions of perfect Hom-Lie algebras.

\bigskip

{\bf 2010 MSC:} 17A30, 17B55,  17B60, 18G35, 18G60.

\bigskip

{\bf Key words:}  Hom-Lie algebra, Hom-action, non-abelian tensor product, nilpotent and solvable Hom-Lie algebras,  universal central extension.


\section{Introduction}

Hom-Lie algebras were introduced in \cite{HLS} as a generalization of Lie algebras, where the usual Jacobi identity is twisted by a linear self-map, that leads us to the so-called Hom-Jacobi identity. One of the main reasons of the introduction of Hom-Lie algebras was the construction of deformations of the Witt algebra, which is the Lie algebra of derivations on the Laurent polynomial algebra $\mathbb{C}[t, t^{-1}]$.
When the self-map is the identity map, then the Hom-Jacobi identity reduces to the usual Jacobi identity, and the definition of a Lie algebra is recovered.
Thus, it is natural to seek for possible generalizations of known theories from Lie to Hom-Lie algebras. In this context, during the last years many papers appeared dealing with the investigations of Hom-Lie structures. Often in these investigations many non-obvious algebraic identities
need to be verified and they are sufficiently non-trivial and interesting (see e. g. \cite{CaInPa, CaKhPa, CaKhPa2, ChS, Lar, MS, S, Yau1} and related references given there).

As an example of these generalizations, the non-abelian tensor product of Hom-Lie algebra was recently developed in \cite{CaKhPa} with various applications in homology and central extensions of Hom-Lie algebras. This non-abelian tensor product is the Hom-version of the non-abelian tensor product of Lie algebras developed by Ellis \cite{El1} and used in \cite{Gu, InKhLa} for the construction of the non-abelian homology of Lie algebras. These non-abelian homology has been recently extended to Hom-Lie algebras in \cite{CaKhPa2}, where the non-abelian homology of Hom-Lie algebras is constructed and its applications in the cyclic homology of Hom-associative algebras are given.

In this paper we continue investigation done in \cite{CaKhPa} and obtain further properties of the non-abelian tensor product of Hom-Lie algebras.
In particular, the paper is organized as follows. In Section 2, some basic definitions on Hom-Lie algebras and necessary results for the development of the paper are recalled. In Section 3, some standard results on nilpotency and solvability of Lie algebras are extended to the case of Hom-Lie algebras. In Section 4, properties of the non-abelian tensor product of Hom-Lie algebras given in \cite{CaKhPa} are enriched by new facts  (see Proposition \ref{exact-tensor-3}).  We also describe the conditions under which products and quotients are preserved by the non-abelian tensor product (Proposition \ref{product} and Proposition \ref{quatient}). In Section 5, it is shown   that, under some conditions, the   non-abelian tensor product of Hom-Lie algebras  inherits the properties of nilpotency, solvability and Engel (Theorem \ref{bounds}).
In Section 6, conditions for preservation of perfectness are established (Proposition \ref{perfect}) and  compatibility of the non-abelian tensor product with the universal central extensions of two perfect Hom-Lie algebras is described (Theorem \ref{Th 5.5} and Corollary \ref{cor 6.7}).

\

Throughout this paper we fix a ground field $\mathbb{K}$. All vector spaces are considered over $\mathbb{K}$ and linear maps are $\mathbb{K}$-linear maps. We write $\otimes$ (resp. $\wedge$)
for the tensor product $\otimes_\mathbb{K}$ (resp. the exterior product  $\wedge_\mathbb{K}$).

\section{Preliminaries on Hom-Lie algebras}
We start by reviewing some notions and terminology.

\begin{De}\label{def}
A Hom-Lie algebra $(L, \alpha_L)$ is a vector space endowed with a bilinear bracket operation (product) $[-,-]:L\times L\rightarrow L$ and a linear self-map
$\alpha_L:L\rightarrow L$ satisfying
\begin{align*}
&[x,y] = - [y,x], & \text{ (skew-symmetry)}\\
& [\alpha_L(x),[y,z]]+[\alpha_L(z),[x,y]]+[\alpha_L(y),[z,x]]=0  & \text{(Hom-Jacobi identity)}
\end{align*}
for all $x, y, z \in L$.
\end{De}


In this paper we only  deal with (the so called \emph{multiplicative}) Hom-Lie algebras $(L, \alpha_L)$ such that $\alpha_L$ preserves the bracket, i. e.  $\alpha_L[x,y]=[\alpha_L(x), \alpha_L(y)]$ for all $x, y \in L$. Appropriately, $\alpha_L$ will be called an endomorphism.

\emph{ A homomorphism of Hom-Lie algebras} $f:(L,\alpha_L)\to (L',\alpha_{L'})$ is a linear map $f : L \to L'$ preserving the bracket operation, i.e. $f[x,y]=[f(x),f(y)]$ for all $x,y\in L$, and commuting with $\alpha$ endomorphisms, i. e. $f \circ \alpha_L = \alpha_{L'} \circ f$.

\begin{De} \
\begin{enumerate}
\item[i)] The center of a  Hom-Lie algebra  $(L,\alpha_L)$ is the vector subspace of $L$
\[
\Z(L) = \{x \in L \mid [x, y] =0 \ \text{ for all}\ y \in L\}.
\]
\item[ii)] A Hom-Lie algebra $(L, \alpha_L)$ is called abelian if $\Z(L)=L$, equivalently, the bracket operation  is trivial in $L$, that is  $[x,y] = 0$ for all $x,y \in L$.
\end{enumerate}
\end{De}
\begin{Rem}
When $\alpha_L : L \to L$ is a surjective endomorphism, then $(\Z(L),\alpha_{{L\mid}} )$ is an abelian Hom-Lie algebra and a Hom-ideal of $(L, \alpha_L)$.
\end{Rem}

\begin{Ex}\
\begin{enumerate}
\item[a)] 
Any Lie algebra $L$ can be considered as a Hom-Lie algebra $(L,\id_L)$.
\item[b)]   Let $V$ be a vector space and $\alpha_V:V\to V$ be a linear map, then the pair $(V , \alpha_V)$ is called Hom-vector space. A Hom-vector space $(V,\alpha_V)$ can be considered as an abelian Hom-Lie algebra.
\item[c)] Let $L$ be a Lie algebra, $[-,-]$ be the Lie bracket in $L$ and $\alpha:L \to L$ be a Lie algebra endomorphism. Define $[-,-]_{\alpha} : L \otimes L \to L$ by $[x,y]_{\alpha} = \alpha_L[x,y]$, for all $x, y \in L$. Then $(L, \alpha)$ with the product $[-,-]_{\alpha}$ is a Hom-Lie algebra \cite[Theorem 5.3]{Yau1}.
\item[d)] Any (multiplicative) Hom-associative algebra  $(A,\alpha_A)$ \cite{MS} is the Hom-Lie algebra with the induced bracket operation  $[a,b]=ab-ba$, $a,b\in A$.
\end{enumerate}
\end{Ex}

\begin{De}
A  Hom-Lie subalgebra $(H,\alpha_{H})$ of $(L, \alpha_L)$ is a vector subspace $H$ of $L$, which is closed under the bracket, together with $\alpha_{H}:H\to H$ being  the restriction of  $\alpha_{L }$ to $H$. In such a case we write $\alpha_{L\mid}$ for $\alpha_{H}$.

A  Hom-Lie subalgebra $(H,\alpha_{L\mid})$ of $(L, \alpha_L)$ is said to be a  Hom-ideal if $[h, x] \in H$ for any $h \in H$ and $x\in L$.

If $(H,\alpha_{L\mid})$ is a Hom-ideal of $(L, \alpha_L)$, then $(L/H, \overline{\alpha}_L )$, where  $L/H$ is the quotient vector space and  $\overline{\alpha}_L:L/H\to L/H$ is induced by $\alpha_L$,  naturally inherits a structure of Hom-Lie algebra and it is called quotient Hom-Lie algebra.

Let $(H,\alpha_{L\mid})$  and $(K,\alpha_{L\mid})$  be  Hom-ideals of  $(L,\alpha_L)$. The commutator of $(H,\alpha_{L\mid})$  and $(K,\alpha_{L\mid})$, denoted by $([H,K], \alpha_{{L\mid}})$, is the  Hom-Lie subalgebra of $(L,\alpha_L)$ spanned (as a vector space) by all $[h,k]$,  $h \in H$, $k \in K$.  In  particular, if  $K=L$, then $([H,L], \alpha_{{L\mid}})$ is a Hom-ideal of $(L,\alpha_L)$.
\end{De}

 \begin{De} \label{action} \
 \begin{enumerate}
 \item[1)]
 Let $(L,\alpha_L)$ and $(M, \alpha_M)$ be  Hom-Lie algebras. A Hom-action of $(L,\alpha_L)$ on  $(M, \alpha_M)$ is a linear map $L \otimes M \to M,$ $x \otimes m\mapsto {}^xm$, satisfying the following properties:
 \begin{enumerate}
 \item[a)] ${}^{[x,y]} \alpha_M(m) = {}^{\alpha_L(x)}({}^y m) - {}^{\alpha_L(y)} ({}^x m)$,
 \item [b)] ${}^{\alpha_L(x)} [m,m'] = [{}^x m, \alpha_M(m')]+[\alpha_M (m), {}^x m']$,
 \item [c)] $\alpha_M({}^x m) = {}^{\alpha_L(x)} \alpha_M(m)$
 \end{enumerate}
 for all $x, y \in L$ and $m, m' \in M$.

 The Hom-action is said to be trivial if ${}^xm=0$ for all $x\in L$ and $m\in M$.

\item[{2)}] Let $(M,\alpha_M)$ and $(N,\alpha_N)$ be Hom-Lie algebras with mutual Hom-actions on each other.
The Hom-actions are  said to be compatible if
\[
^{(^mn)} m'=[m',^nm] \quad \text{and} \quad ^{(^nm)}n'=[n',^mn]
\]
for all $m,m'\in M$ and $n,n'\in N$.
\end{enumerate}
\end{De}

\begin{Ex}\ \label{example_action}
\begin{enumerate}
\item[1)] Let  $(L, \alpha_{L})$ be a Hom-subalgebra of a  Hom-Lie algebra $(K,\alpha_{K})$ (maybe $K=L$, which also implies that $\alpha_L=\alpha_K$) and $(M,\alpha_{M})$ be a Hom-ideal of $(K,\alpha_{K})$. Then there is a Hom-action of $(L,\alpha_{L})$ on $(M,\alpha_{M})$ given by the structural bracket  in $K$.
\item[2)] Let $(M,\alpha_{M})$ and $(N,\alpha_{N})$ be two Hom-ideals of a Hom-Lie algebra $(L, \alpha_{L})$. Then there are compatible Hom-actions  of $(M,\alpha_{M})$ and $(N,\alpha_{N})$ on each other given by the structural bracket  in $L$.
\end{enumerate}
\end{Ex}

\begin{De}
 Let $(L,\alpha_L)$ be a Hom-Lie algebra  acting on another Hom-Lie algebra $(M,\alpha_M)$. The semi-direct product Hom-Lie algebra $(L,\alpha_L)$ and $(M,\alpha_M)$ is the Hom-Lie algebra $(M \rtimes L, {\alpha_{\rtimes}})$, with the underlying  vector space $M \oplus L$ and the bracket operation
\[
[(m_1,x_1),(m_2,x_2)]= ([m_1,m_2]+{^{\alpha_L(x_1)}}{m_2} - {^{\alpha_L(x_2)}}{m_1},[x_1,x_2]),
\]
together with the endomorphism ${\alpha_{\rtimes}} : M \rtimes L \to M \rtimes L$ given by ${\alpha_{\rtimes}} (m, x) = \left(\alpha_M(m), \alpha_L(x)\right)$ for all $x,x_1,x_2 \in L$ and $m, m_1, m_2 \in M$.
\end{De}

\section{On solvability  and nilpotency of  Hom-Lie algebras}

Let $(M,\alpha_M)$ be a Hom-ideal of a Hom-Lie algebra $(Q, \alpha_Q)$. \emph{A series } from $(M, \alpha_M)$ to $(Q, \alpha_Q)$ is a finite sequence of Hom-ideals $(M_i, \alpha_{Q\mid})$, $0\leq i \leq k$,  of $(Q, \alpha_Q)$ such that
\[
M = M_0 \unlhd M_1 \unlhd \dots \unlhd M_{k-1} \unlhd M_k = Q.
\]
$k$ is called \emph{the length of the series}.

\;

A series from $(M, \alpha_M)$ to $(Q, \alpha_Q)$ is said to be \emph{central} (resp. \emph{abelian}) if $[Q,M_i]  \subseteq M_{i-1}$, equivalently $M_i/M_{i-1} \subseteq \Z(Q/M_{i-1})$ (resp. if $[M_i,M_i] \subseteq M_{i-1}$, equivalently $[M_i/M_{i-1}, M_i/M_{i-1}] = 0$. )

\;

A series from $(0, \alpha_{Q \mid})$ to $(Q, \alpha_Q)$ is called \emph{a series of the Hom-Lie algebra} $(Q, \alpha_Q)$.

\begin{De}\
\begin{enumerate}
\item[i)] A Hom-Lie algebra $(Q,\alpha_Q)$ is said to be solvable if it has an abelian series. Let $k$ be the minimal length of such series, then $k$ is said to be the class of solvability of $(Q,\alpha_Q)$.
\item[ii)] A Hom-Lie algebra $(Q,\alpha_Q)$ is said to be nilpotent if it has a central series. Let $k$ be the minimal length of such series, then $k$ is said to be the class of nilpotency of $(Q,\alpha_Q)$.
\end{enumerate}
\end{De}

Given a Hom-Lie algebra $(Q, \alpha_{Q})$, consider the sequence of Hom-Lie subalgebras $(Q^{(i)}, \alpha_{Q \mid})$, $i\geq 0$, defined inductively by
\[
Q^{(0)} = Q \quad \text{and} \quad Q^{(i+1)}=[Q^{(i)}, Q^{(i)}], \quad i \geq 0.
\]
It is clear that
\[
\dots \subseteq Q^{(i)} \subseteq \dots \subseteq Q^{(1)} \subseteq Q^{(0)}
\]
and in the particular case of a Lie algebra $Q$ (= $(Q,\id_Q)$), this sequence is exactly the derived series of Lie algebra $Q$.
In general, $(Q^{(i)}, \alpha_{Q \mid})$ terms are Hom-Lie subalgebras and not Hom-ideals of $(Q, \alpha_{Q})$.  This fact is illustrated by the following example.
\begin{Ex}
Consider the four-dimensional Hom-Lie algebra $(Q, \alpha_Q)$ with basis $\{a_1,a_2,a_3,a_4\}$, bracket operation given by $[a_1,a_2]=-[a_2,a_1]=a_1$, $[a_1,a_3]=-[a_3,a_1]=a_2$ and the map $\alpha_Q =0$. It is easy to see that  $Q^{(2)} = \langle \{a_1 \} \rangle$, which is not a Hom-ideal of $(Q,\alpha_Q)$. \end{Ex}

By this reason we can not speak on derived series of a Hom-Lie algebra in the general set-up. Below we indicate wide class of Hom-Lie algebras $(Q, \alpha_Q)$ for which $(Q^{(i)}, \alpha_{Q \mid})$ terms are Hom-ideals for all $i\geq 0$.

\begin{Rem} \label{conditions}
In order to guarantee that the terms $(Q^{(i)}, \alpha_{Q \mid})$, $i \geq 1$, are Hom-ideals, the Hom-Lie algebra $(Q,\alpha_Q)$ should satisfy additional condition. For instance, this is the case  if the endomorphism $\alpha_Q$ is surjective.

Another broad class of Hom-Lie algebras with the same property, are Hom-Lie algebras satisfying the so-called $\alpha$-identity condition given in \cite{CaKhPa}, that is, Hom-Lie algebra $(Q,\alpha_Q)$ such that $[Q, \im(\alpha_Q- \Id_Q)]=0$, which is equivalent to the condition $[x,y]=[\alpha_Q(x),y]$, for all $x, y \in Q$. Examples of this kind of Hom-Lie algebras can be found in \cite{CaKhPa}.

Moreover, actually a weaker condition is sufficient. Namely, it is only needed the condition $[[Q,Q], \im(\alpha_Q- \Id_Q)]=0$, equivalently $[x, [y, z]]=[\alpha_Q(x),[y,z]]$, for all $x, y, z \in Q$. Later we call this condition weak $\alpha$-identity condition.

The weak $\alpha$-identity condition leads us to a broader class of examples. For instance, let $(Q,\alpha_Q)$ be the two-dimensional Hom-Lie algebra with basis $\{a_1,a_2\}$, bracket operation given by $[a_1,a_2]=-[a_2,a_1]= a_1$ and endomorphism $\alpha_Q$ represented by the matrix $\left( \begin{array}{cc} 0 & 1 \\ 0 & 1 \end{array} \right)$ (see \cite{CaInPa}). Then $(Q,\alpha_Q)$ satisfies weak $\alpha$-identity condition, it does not satisfy $\alpha$-identity condition and at the same time $\alpha_Q $ is not surjective.
\end{Rem}

\begin{Th} \label{solvable1} \
\begin{enumerate}
\item[a)] Let $(Q,\alpha_Q)$ be a Hom-Lie algebra and $M = M_0 \unlhd M_1 \unlhd \dots \unlhd M_{j-1} \unlhd M_j = Q$ be an abelian series
from $(M, \alpha_{Q\mid})$ to $(Q, \alpha_Q)$, then $Q^{(i)} \subseteq M_{j-i}$, $0 \leq i \leq j$.
\item[b)] Let $(Q,\alpha_Q)$ be a Hom-Lie algebra such that $\alpha_Q$ is surjective or $(Q,\alpha_Q)$ satisfies the weak $\alpha$-identity condition (see Remark \ref{conditions}). Then $(Q,\alpha_Q)$ is solvable with class of solvability $k$ if and only if $Q^{(k)} =0$ and $Q^{(k-1)} \neq 0$.
\end{enumerate}
\end{Th}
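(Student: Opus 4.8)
The plan is to establish part (a) by a direct induction, and then to read off part (b) from (a) together with Remark \ref{conditions}.

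For (a) I would induct on $i$. The case $i=0$ is the equality $Q^{(0)} = Q = M_j$. Assuming $Q^{(i)} \subseteq M_{j-i}$ for some $i$ with $0 \le i \le j-1$, the monotonicity of the series together with the abelian condition $[M_{j-i},M_{j-i}] \subseteq M_{j-i-1}$ gives
\[
Q^{(i+1)} = [Q^{(i)}, Q^{(i)}] \subseteq [M_{j-i}, M_{j-i}] \subseteq M_{j-i-1} = M_{j-(i+1)}.
\]
Note that this part is purely a statement about vector-subspace inclusions, so it requires no hypothesis on $\alpha_Q$.

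For (b), under either of the two hypotheses, Remark \ref{conditions} guarantees that every $(Q^{(i)}, \alpha_{Q\mid})$, $i \ge 0$, is a Hom-ideal of $(Q,\alpha_Q)$. Since the $Q^{(i)}$ form a decreasing chain with $[Q^{(i)}, Q^{(i)}] = Q^{(i+1)}$, whenever $Q^{(k)} = 0$ the finite sequence
\[
0 = Q^{(k)} \unlhd Q^{(k-1)} \unlhd \dots \unlhd Q^{(1)} \unlhd Q^{(0)} = Q
\]
is an abelian series of the Hom-Lie algebra $(Q, \alpha_Q)$ of length $k$. Hence, if $Q^{(k)} = 0$ and $Q^{(k-1)} \neq 0$, then $(Q, \alpha_Q)$ is solvable with class of solvability at most $k$; and were this class some $k' < k$, applying (a) to an abelian series of length $k'$ would force $Q^{(k')} \subseteq M_0 = 0$, hence $Q^{(k-1)} \subseteq Q^{(k')} = 0$, a contradiction. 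Conversely, if $(Q, \alpha_Q)$ is solvable of class $k$, choose an abelian series of length $k$; then (a) gives $Q^{(k)} \subseteq M_0 = 0$, while $Q^{(k-1)} = 0$ would make $0 = Q^{(k-1)} \unlhd \dots \unlhd Q^{(0)} = Q$ an abelian series of length $k-1$, contradicting the minimality of $k$.

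The only genuinely delicate point — and the one place where the surjectivity or weak $\alpha$-identity hypothesis enters — is the assertion that the derived terms $Q^{(i)}$ are Hom-ideals of $(Q,\alpha_Q)$, so that they assemble into a bona fide series in the sense defined above; this is exactly what Remark \ref{conditions} supplies. Everything else is bookkeeping with the decreasing chain $\big(Q^{(i)}\big)_i$ and the definitions of abelian series and of class of solvability.
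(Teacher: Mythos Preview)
Your proposal is correct and follows essentially the same approach as the paper: the paper proves (a) ``by induction on $i$'' and derives (b) by exhibiting the derived chain as an abelian series (invoking the hypothesis via Remark \ref{conditions}) and using (a) for minimality and the converse. Your write-up simply makes explicit the details the paper leaves implicit.
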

\begin{proof}
{\it a)} This easily follows by induction on $i$.

\noindent{\it b)} If $Q^{(k)} =0$ and $Q^{(k-1)} \neq 0$ then $0= Q^{(k)} \unlhd Q^{(k-1)} \unlhd \dots \unlhd Q^{(1)} \unlhd Q^{(0)} = Q$ is an abelian series of $(Q,\alpha_Q)$ and its length is minimal by {\it a)}. Therefore $(Q,\alpha_Q)$ is solvable of class $k$. The converse is a direct consequence of {\it a)}.
\end{proof}


\

The \emph{lower central series} of a Hom-Lie algebra $(Q, \alpha_{Q})$ is the sequence of Hom-ideals $(Q^{[i]}, \alpha_{Q\mid})$, $i \geq 0$,
 defined inductively by
\[
Q^{[0]} = Q \quad \text{and} \quad Q^{[i+1]}=[Q^{[i]}, Q], \quad i \geq 0.
\]
It is readily checked that each $(Q^{[i]}, \alpha_{Q\mid})$ is indeed a Hom-ideal of $(Q, \alpha_{Q})$ and
\[
\dots \subseteq Q^{[i]} \subseteq \dots \subseteq Q^{[1]} \subseteq Q^{[0]}.
\]

\begin{Th}\ \label{nilpotent}
\begin{enumerate}
\item[a)]  Let $(Q,\alpha_Q)$ be a Hom-Lie algebra and  $M = M_0 \unlhd M_1 \unlhd \dots \unlhd M_{j-1} \unlhd M_j = Q$ be a central series from $(M, \alpha_{Q\mid})$ to $(Q, \alpha_Q)$, then $Q^{[i]} \subseteq M_{j-i}$, $0 \leq i \leq j$.
\item[b)] $(Q,\alpha_Q)$ is nilpotent Hom-Lie algebra with class of nilpotency  $k$ if and only if $Q^{[k]} =0$ and $Q^{[k-1]} \neq 0$.
\end{enumerate}
\end{Th}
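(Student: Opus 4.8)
The plan is to mirror the proof of Theorem \ref{solvable1}, with the lower central series $(Q^{[i]},\alpha_{Q\mid})$ playing the role of the derived series. The key point, already recorded just before the statement, is that every $(Q^{[i]},\alpha_{Q\mid})$ is automatically a Hom-ideal of $(Q,\alpha_Q)$; consequently, in contrast to Theorem \ref{solvable1} b), no surjectivity or (weak) $\alpha$-identity hypothesis on $\alpha_Q$ is needed, which is why part b) is stated unconditionally.

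For part a) I would induct on $i$. The case $i=0$ is immediate, since $Q^{[0]}=Q=M_j=M_{j-0}$. Assuming $Q^{[i]}\subseteq M_{j-i}$ for some $0\le i\le j-1$, centrality of the series (applied at index $j-i\ge 1$) gives $[Q,M_{j-i}]\subseteq M_{j-i-1}$, hence by skew-symmetry
\[
Q^{[i+1]}=[Q^{[i]},Q]=[Q,Q^{[i]}]\subseteq [Q,M_{j-i}]\subseteq M_{j-i-1}=M_{j-(i+1)},
\]
completing the induction and in particular yielding $Q^{[j]}\subseteq M_0=M$.

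For part b), suppose first $Q^{[k]}=0$ and $Q^{[k-1]}\neq 0$. Then $0=Q^{[k]}\unlhd Q^{[k-1]}\unlhd\dots\unlhd Q^{[0]}=Q$ is a series of $(Q,\alpha_Q)$, and it is central because $[Q,Q^{[i]}]=Q^{[i+1]}$ for all $i$; thus $(Q,\alpha_Q)$ is nilpotent of class at most $k$. If it admitted a central series of length $j\le k-1$, part a) would force $Q^{[j]}\subseteq M_0=0$, and since the lower central series is decreasing this would give $Q^{[k-1]}\subseteq Q^{[j]}=0$, a contradiction; hence the class of nilpotency equals $k$. Conversely, if $(Q,\alpha_Q)$ is nilpotent of class $k$, choose a central series $0=M_0\unlhd\dots\unlhd M_k=Q$ of minimal length $k$; part a) gives $Q^{[k]}\subseteq M_0=0$, and $Q^{[k-1]}\neq 0$, since otherwise $0=Q^{[k-1]}\unlhd\dots\unlhd Q^{[0]}=Q$ would be a central series of length $k-1$, contradicting minimality. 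The argument is entirely routine; there is no real obstacle here, the only substantive input being the already-noted fact that the lower central series consists of Hom-ideals irrespective of any condition on $\alpha_Q$.
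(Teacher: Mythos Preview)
Your proof is correct and follows exactly the approach the paper takes: the paper's own proof simply reads ``This is similar to the proof of Theorem \ref{solvable1},'' and you have spelled out precisely that analogy, including the correct observation that no extra hypothesis on $\alpha_Q$ is required here because the $Q^{[i]}$ are automatically Hom-ideals.
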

\begin{proof}
This is similar to the proof of Theorem \ref{solvable1}.
\end{proof}

\begin{Ex}\
\begin{enumerate}
\item[a)] Abelian Hom-Lie algebras are  nilpotent, hence solvable  Hom-Lie algebras of class 1 (keep in mind that the weak $\alpha$-identity condition is satisfied).

\item[b)] The 2-dimensional complex Hom-Lie algebra \cite{CaInPa} with basis $\{a_1,a_2\}$, bracket operation given by $[a_1,a_2] = - [a_2, a_1]= a_1$ and endomorphism $\alpha$ represented by the matrix $\left( \begin{array}{cc} 0 & {1} \\0 & 1 \end{array} \right)$ (see Remark \ref{conditions}) is non nilpotent, but is solvable of class 2.

\item[c)] The 3-dimensional Hom-Lie algebra  with basis $\{a_1,a_2,a_3 \}$, bracket operation given by $[a_1,a_2]= - [a_2,a_1] = a_3$ and endomorphism $\alpha = 0$  is a nilpotent Hom-Lie algebra of class 2.
\end{enumerate}
\end{Ex}


\section{Non-abelian tensor product of Hom-Lie algebras}

In this section we recall from \cite{CaKhPa} the non-abelian tensor product of Hom-Lie algebras and establish new properties needed in the sequel.
\begin{De}
Let $(M,\alpha_M)$ and $(N,\alpha_N)$ be Hom-Lie algebras with Hom-actions on each other and let $(L,\alpha_L)$ be one more Hom-Lie algebra. A bilinear map $h : M\times N\to L$ is said to be a Hom-Lie pairing if the following properties are satisfied:
\begin{enumerate}
\item [i)] $h([m,m'],\alpha_N(n)) = h(\alpha_M(m), {}^{m'} n) - h(\alpha_M(m'), {}^m n)$,
 \item [ii)] $h(\alpha_M(m),[n,n']) = h({}^{n'} m, \alpha_N(n))  - h({}^n m, \alpha_N(n'))$,
 \item [iii)] $h({}^n  m, {}^{m'} n') = - [h(m,n), h(m',n')]$,
 \item [iv)] $h  \circ (\alpha_M \times \alpha_N) = \alpha_L  \circ h$,
\end{enumerate}
for all $m, m' \in M$, $n, n' \in N$.
\end{De}

\begin{De} \label{Def}
Let $(M,\alpha_M)$ and $(N,\alpha_N)$ be Hom-Lie algebras with compatible mutual Hom-actions on each other.  The non-abelian tensor product of $(M,\alpha_M)$ and $(N,\alpha_N)$ is the Hom-Lie algebra, denoted by $(M,\alpha_M)\star (N,\alpha_N)$ or shorter by $(M\star N, \alpha_{M\star N})$, defined as follows: as vector space, $M\star N$ is the quotient of the tensor product $M\otimes N$ (of the underlying vector spaces) by the vector subspace spanned by all elements of the form
\begin{enumerate}
\item[{\it i)}] $[m,m']\otimes \alpha_N(n) -\alpha_M(m)\otimes {}^{m'}n +\alpha_M(m') \otimes {}^mn$,
\item[{\it ii)}] $\alpha_M(m)\otimes [n,n'] - {}^{n'}m\otimes\alpha_N(n)+{}^{n}m\otimes\alpha_N(n')$,
\item[{\it iii)}] ${}^{n}m\otimes {}^{m}n$,
\item[{\it iv)}]${}^{n}m\otimes {}^{m'}n' +{}^{n'}m'\otimes {}^{m}n$,
\item[{\it v)}]$[{}^{n}m,{}^{n'}m']\otimes \alpha_N({}^{m''}n'')+[{}^{n'}m',{}^{n''}m'']\otimes \alpha_N({}^{m}n)
+[{}^{n''}m'',{}^{n}m]\otimes \alpha_N({}^{m'}n')$,
\end{enumerate}
for $m,m',m''\in M$ and $n,n',n''\in N$.  The bracket operation is defined on generators by
\[
[m\otimes n, m'\otimes n']=-{}^{n}m\otimes {}^{m'}n';
\]
and the linear map $\alpha_{M\star N}$ is naturally induced by $\alpha_M$ and $\alpha_N$.
The equivalence class of $m\otimes n$ will be denoted by $m\star n$.
\end{De}

Below we give some fundamental properties of the non-abelian tensor product of Hom-Lie algebras. First we present its kind of universality property needed in the sequel.

\begin{Le}\label {Lie pairing}
Let $(M,\alpha_M)$ and $(N,\alpha_N)$ be Hom-Lie algebras with compatible Hom-actions on each other. For a Hom-Lie algebra $(L,\alpha_L)$ and any Hom-Lie pairing $h : (M\times N,\alpha_{M \times N}) \to (L,\alpha_L)$ there exists a unique homomorphism $h^{\ast} : (M\star N,\alpha_{M\star N}) \to (L,\alpha_L)$ such that $h^{\ast} (m\star n)=h(m,n)$ for all $m\in M$ and $n\in N$.
\end{Le}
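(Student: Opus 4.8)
The plan is to produce $h^{\ast}$ by the usual two-step factorisation: first through the tensor product of vector spaces, then through the defining relations of $M\star N$. Since $h\colon M\times N\to L$ is bilinear, it induces a unique linear map $\tilde h\colon M\otimes N\to L$ with $\tilde h(m\otimes n)=h(m,n)$. The crux is to show that $\tilde h$ annihilates the subspace $R\subseteq M\otimes N$ spanned by the five families of relators i)--v) of Definition \ref{Def}. Once this is verified, $\tilde h$ factors through the quotient $M\star N=(M\otimes N)/R$, yielding a unique linear map $h^{\ast}\colon M\star N\to L$ with $h^{\ast}(m\star n)=h(m,n)$, and it will only remain to check that $h^{\ast}$ preserves the bracket, commutes with the structural endomorphisms, and is the unique such homomorphism.

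For the relators of types i) and ii) there is nothing beyond bookkeeping: applying $\tilde h$ to such a relator produces exactly the expression which the Hom-Lie pairing axiom i) (resp. ii)) asserts to be zero. For type iii), $\tilde h({}^{n}m\otimes{}^{m}n)=h({}^{n}m,{}^{m}n)=-[h(m,n),h(m,n)]=0$ by pairing axiom iii) and skew-symmetry of the bracket in $L$; and for type iv), $\tilde h({}^{n}m\otimes{}^{m'}n'+{}^{n'}m'\otimes{}^{m}n)=-[h(m,n),h(m',n')]-[h(m',n'),h(m,n)]=0$ for the same two reasons.

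I expect the relator of type v) to be the genuine obstacle, and it is precisely here that the Hom-Jacobi identity of $L$ must be used. The strategy I would follow: abbreviate $a=h(m,n)$, $b=h(m',n')$, $c=h(m'',n'')$, and first re-express the two ingredients of a type-v) relator, namely $[{}^{n}m,{}^{n'}m']=-{}^{({}^{m}n)}({}^{n'}m')$ (by compatibility of the Hom-actions, Definition \ref{action}) and $\alpha_N({}^{m''}n'')={}^{\alpha_M(m'')}\alpha_N(n'')$ (by the multiplicativity condition c) of a Hom-action). Substituting these, applying pairing axiom iii) twice and pairing axiom iv) once, the first summand of $\tilde h$ on the relator becomes
\[
h\bigl([{}^{n}m,{}^{n'}m'],\,\alpha_N({}^{m''}n'')\bigr)=[\alpha_L(c),[b,a]].
\]
Since the relator is cyclically symmetric in the triples $(m,n),(m',n'),(m'',n'')$, the remaining two summands contribute $[\alpha_L(a),[c,b]]$ and $[\alpha_L(b),[a,c]]$; the sum of the three is precisely the left-hand side of the Hom-Jacobi identity for the triple $c,b,a$ in $(L,\alpha_L)$, hence $0$. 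This establishes $\tilde h(R)=0$ and produces $h^{\ast}$.

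It then remains to verify that $h^{\ast}$ is a morphism of Hom-Lie algebras, which I expect to be routine. On generators, $h^{\ast}([m\star n,m'\star n'])=h^{\ast}(-{}^{n}m\star{}^{m'}n')=-h({}^{n}m,{}^{m'}n')=[h(m,n),h(m',n')]=[h^{\ast}(m\star n),h^{\ast}(m'\star n')]$ by pairing axiom iii); since the elements $m\star n$ span $M\star N$ as a vector space and the bracket is bilinear, this extends to all of $M\star N$. Likewise $h^{\ast}\circ\alpha_{M\star N}$ and $\alpha_L\circ h^{\ast}$ agree on each $m\star n$ by pairing axiom iv) (using $\alpha_{M\star N}(m\star n)=\alpha_M(m)\star\alpha_N(n)$), hence everywhere. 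Uniqueness is immediate, since any homomorphism sending $m\star n$ to $h(m,n)$ for all $m,n$ must coincide with $h^{\ast}$ by linearity on the spanning set.
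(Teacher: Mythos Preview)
Your proof is correct and complete. The paper itself does not give a self-contained argument here; it simply invokes Definition~3.8 and Proposition~3.9 of \cite{CaKhPa}, where the universal property of the non-abelian Hom-Lie tensor product is established. What you have written is essentially the direct verification that lies behind that citation: factor the bilinear pairing through $M\otimes N$, check that the five families of relators map to zero (using the pairing axioms for i)--iv) and, as you correctly identify, compatibility of the actions together with the Hom-Jacobi identity in $L$ for v)), and then verify that the induced map respects the bracket and the $\alpha$-endomorphisms. Your computation for the type-v) relator is accurate, and the extension of the bracket-preservation from generators to all of $M\star N$ by bilinearity is the standard and valid step. So there is no genuine divergence in method---you have simply supplied the details that the paper delegates to the earlier reference.
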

\begin{proof}
It follows directly from Definition 3.8 and Proposition 3.9 in \cite{CaKhPa}.
\end{proof}
\medskip

\begin{Pro} \label{action-on-tensor} \cite{CaKhPa}
Let  $(M,\alpha_M)$ and $(N,\alpha_N)$ be Hom-Lie algebras with compatible Hom-actions on each other.
 \begin{enumerate}
\item[a)] There are homomorphisms of Hom-Lie algebras

$\begin{array}{rl}
&\psi_M:(M\star N, \alpha_{M \star N}) \to (M, \alpha_{M }), \quad \psi_M(m\star n)= -{}^nm,\\
&\psi_N:(M\star N, \alpha_{M \star N}) \to (N, \alpha_N),\quad  \psi_N(m\star n)= {}^mn.
\end{array}$
\item[b)]
There are Hom-actions of  $(M, \alpha_{M })$ and $(N, \alpha_{N })$ on  the Hom-Lie tensor product ($M\star N, \alpha_{M \star N}$) given, for all $m,m'\in M$, $n,n'\in N$, by
\[
\begin{array}{rl}
&{}^{m'}(m\star n)=[m',m]\star \alpha_N(n)+\alpha_M(m)\star {}^{m'}n, \\
 &{}^{n'}(m\star n)={}^{n'}m\star \alpha_N(n)+\alpha_M(m)\star [n',n]
\end{array}
\]
\item[c)] Both $\Ker(\psi_M)$ and $\Ker(\psi_N)$ are contained in the center of $(M\star N, \alpha_{M \star N})$.
\item[d)] The induced Hom-actions of~ $\im(\psi_M)$ on $\Ker(\psi_M)$ and of $\im(\psi_N)$ on $\Ker(\psi_N)$ are trivial.
\item[e)]  $\psi_M$ and $\psi_N$ satisfy the following properties for all $m, m' \in M$, $n, n' \in N$:
\begin{enumerate}
\item[i)] $\psi_M(^{m'}(m \star n)) = [\alpha_M(m'), \psi_M(m \star n)]$,
\item[ii)] $\psi_N(^{n'}(m \star n)) = [\alpha_N(n'), \psi_N(m \star n)]$,
\item[iii)] ${^{\psi_M(m \star n)}}(m' \star n') = [\alpha_{M \star N}(m \star n), m' \star n'] = {^{\psi_N(m \star n)}} (m' \star n')$.
\end{enumerate}
\end{enumerate}
\end{Pro}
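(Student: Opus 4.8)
I would establish the five parts in the order (a), (b), (e), (c), (d), the last two being formal consequences of the earlier ones. For (a) the cleanest route is the universality Lemma \ref{Lie pairing}: consider the bilinear maps $h_M\colon M\times N\to M$, $h_M(m,n)=-{}^{n}m$, and $h_N\colon M\times N\to N$, $h_N(m,n)={}^{m}n$. One checks that each is a Hom-Lie pairing; the four defining conditions of a Hom-Lie pairing are straightforward rearrangements of the three Hom-action axioms of Definition \ref{action} and the compatibility conditions. Lemma \ref{Lie pairing} then produces Hom-Lie algebra homomorphisms $\psi_M:=h_M^{\ast}$ and $\psi_N:=h_N^{\ast}$ with the asserted formulas, and their being homomorphisms is part of that lemma.

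For (b), fix $m'\in M$ and define a linear map $M\otimes N\to M\star N$ on generators by $m\otimes n\mapsto[m',m]\star\alpha_N(n)+\alpha_M(m)\star{}^{m'}n$. I would check that it annihilates each of the five families of defining relations (i)--(v) of Definition \ref{Def}, so that it descends to a well-defined assignment $m\star n\mapsto{}^{m'}(m\star n)$; combined with its visible linearity in $m'$ this gives a linear map $M\otimes(M\star N)\to M\star N$, and one then confirms the three Hom-action axioms of Definition \ref{action}. The companion $N$-action is treated the same way. I expect the check against relation (v), the Hom-Jacobi-type relation, to be the main obstacle: it collapses, after repeated use of the compatibility conditions and the Hom-action axioms, to the Hom-Jacobi identity in $M$ (respectively $N$) together with relation (v) itself read back inside $M\star N$.

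Part (e) is a sequence of direct computations resting on (a) and (b). For (i): apply $\psi_M$ to the formula for ${}^{m'}(m\star n)$ from (b); one term is rewritten using axiom (b) of Definition \ref{action} and the other using the compatibility identity ${}^{({}^{m'}n)}\alpha_M(m)=[\alpha_M(m),{}^{n}m']$, and the two collapse to $[\alpha_M(m'),\psi_M(m\star n)]$; (ii) is the mirror computation. For (iii): on the left, ${}^{\psi_M(m\star n)}(m'\star n')=-{}^{({}^{n}m)}(m'\star n')$ expands by the action formula of (b) and a compatibility identity; on the right, $[\alpha_{M\star N}(m\star n),m'\star n']=-\alpha_M({}^{n}m)\star{}^{m'}n'$ by the bracket formula and axiom (c) of Definition \ref{action}; the two expressions agree exactly because of relation (i) of Definition \ref{Def} (applied to the element ${}^{n}m\in M$), and a parallel rewriting, using only the compatibility conditions and skew-symmetry, shows the left side also equals ${}^{\psi_N(m\star n)}(m'\star n')$.

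Finally (c) and (d) are formal. From the bracket formula and (a) one gets $[m\star n,m'\star n']=\psi_M(m\star n)\star\psi_N(m'\star n')$, hence $[z,w]=\psi_M(z)\star\psi_N(w)$ for all $z,w\in M\star N$ by bilinearity of $\star$; thus $\psi_M(z)=0$ forces $[z,w]=0$ for all $w$, that is $z\in\Z(M\star N)$, and $\psi_N(z)=0$ yields the same conclusion after a use of skew-symmetry, which proves (c); in particular $\Ker(\psi_M)$ and $\Ker(\psi_N)$ are central Hom-ideals. For (d), every element of $\im(\psi_M)$ has the form $\psi_M(w)$ for some $w\in M\star N$, and identity (iii) of part (e) then gives ${}^{\psi_M(w)}z=[\alpha_{M\star N}(w),z]=0$ for each $z\in\Ker(\psi_M)$ because $z$ is central; the statement for $\psi_N$ follows identically from the second equality in (e)(iii).
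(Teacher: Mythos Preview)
The paper does not give a proof of this proposition at all: it is quoted verbatim from \cite{CaKhPa} (note the citation attached to the proposition header), and the text immediately continues with Corollary~\ref{Corollary 4.3}. So there is nothing to compare your argument against in this paper itself.

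That said, your plan is the natural one and is correct in outline. A couple of small remarks. First, in part (c) your key identity $[z,w]=\psi_M(z)\star\psi_N(w)$ is exactly right and does extend from generators by bilinearity, since both sides are bilinear in the generator decompositions of $z$ and $w$; you might state this explicitly rather than invoking ``bilinearity of $\star$'', which could be misread. Second, in part (d) you use the extension of (e)(iii) from generators to arbitrary elements of $M\star N$; this is fine because both sides of (e)(iii) are linear in each argument, but again it is worth saying so. Finally, your reduction of (d) to (c) via (e)(iii) is cleaner than checking (d) directly, and is indeed how the result is typically derived.
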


\begin {Co} \label{Corollary 4.3} If $t\in M\star N,m'\in M, n' \in N$ then $\psi_M(t)\star \alpha_N(n')= -^{n'}t$ and $\alpha(m')\star \psi_N(t) = {^{m'}}t.$
\end {Co}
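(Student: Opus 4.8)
The statement is Corollary \ref{Corollary 4.3}, an immediate consequence of Proposition \ref{action-on-tensor}, specifically part b) (the formulas for the Hom-actions of $M$ and $N$ on $M\star N$) together with part e) (the compatibility identities for $\psi_M$ and $\psi_N$). The idea is to reduce both claimed identities to the generator case $t=m\star n$ by linearity, and then to evaluate the right-hand sides using the explicit action formulas from part b).

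\medskip

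\noindent First I would prove $\alpha_{M}(m')\star\psi_N(t)={}^{m'}t$. Since every element of $M\star N$ is a sum of generators $m\star n$ and all maps involved are linear, it suffices to check this for $t=m\star n$. Then $\psi_N(m\star n)={}^{m}n$ by part a), so the left-hand side is $\alpha_M(m')\star{}^{m}n$. On the other hand, by part b),
\[
{}^{m'}(m\star n)=[m',m]\star\alpha_N(n)+\alpha_M(m)\star{}^{m'}n.
\]
So I must show $\alpha_M(m')\star{}^m n=[m',m]\star\alpha_N(n)+\alpha_M(m)\star{}^{m'}n$. But this is exactly relation {\it ii)} in Definition \ref{Def} (the defining relation $\alpha_M(m)\otimes[n,n']-{}^{n'}m\otimes\alpha_N(n)+{}^{n}m\otimes\alpha_N(n')$ becomes $0$ in $M\star N$), applied after rewriting: rename so that relation {\it ii)} reads, for the pair with $M$-side element $m$ and the bracket on $N$ replaced appropriately — more directly, it is relation {\it i)} of Definition \ref{Def}, $[m',m]\otimes\alpha_N(n)-\alpha_M(m')\otimes{}^{m}n+\alpha_M(m)\otimes{}^{m'}n\equiv 0$, which rearranges precisely to the desired equality. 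Hence the first identity holds on generators, and therefore on all of $M\star N$.

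\medskip

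\noindent The second identity $\psi_M(t)\star\alpha_N(n')=-{}^{n'}t$ is handled symmetrically. For $t=m\star n$ we have $\psi_M(m\star n)=-{}^{n}m$, so the left-hand side is $-{}^{n}m\star\alpha_N(n')$, while by part b)
\[
{}^{n'}(m\star n)={}^{n'}m\star\alpha_N(n)+\alpha_M(m)\star[n',n].
\]
Thus I need ${}^{n}m\star\alpha_N(n')+{}^{n'}m\star\alpha_N(n)+\alpha_M(m)\star[n',n]=0$, which is relation {\it ii)} of Definition \ref{Def} (namely $\alpha_M(m)\otimes[n,n']-{}^{n'}m\otimes\alpha_N(n)+{}^{n}m\otimes\alpha_N(n')\equiv 0$) after using skew-symmetry $[n',n]=-[n,n']$ and multiplying through by $-1$. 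Again this gives the identity on generators and hence everywhere by linearity.

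\medskip

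\noindent There is essentially no obstacle here — the corollary is a direct bookkeeping consequence of the defining relations of $M\star N$ and the action formulas already recorded in Proposition \ref{action-on-tensor}. The only mild care needed is to track the signs coming from skew-symmetry and from the convention $[m\star n,m'\star n']=-{}^{n}m\star{}^{m'}n'$, and to note that reducing to generators is legitimate because $\psi_M,\psi_N,\alpha_N$ and the action maps are all linear. Alternatively, one can quote part e)(iii) of Proposition \ref{action-on-tensor} directly: it already asserts ${}^{\psi_M(m\star n)}(m'\star n')={}^{\psi_N(m\star n)}(m'\star n')$, and combined with the expressions in part b) the same two lines of rearrangement yield the statement; I would present the self-contained computation above since it is shortest.
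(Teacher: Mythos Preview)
Your proof is correct and follows the same route as the paper: reduce to generators $t=m\star n$ by linearity, compute $\psi_M$, $\psi_N$ explicitly, and then invoke the defining relations {\it i)} and {\it ii)} of Definition~\ref{Def} to match the action formulas in Proposition~\ref{action-on-tensor}~b). One small slip: the identity you say you ``need'' in the second computation should read ${}^{n}m\star\alpha_N(n') - {}^{n'}m\star\alpha_N(n) - \alpha_M(m)\star[n',n]=0$ (two sign flips), which is then exactly relation~{\it ii)} after replacing $[n',n]$ by $-[n,n']$; your conclusion is unaffected.
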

\begin{proof}
By statements a) and b) in Proposition \ref{action-on-tensor}, it suffices to prove formulas on generators. By using again Proposition \ref{action-on-tensor} we have
\[
\begin{array}{rl}
\psi_M(t)\star\alpha_N(n')= & \psi_M(m \star n)\star \alpha_N(n')=- {^nm} \star \alpha_N(n')\\
 = & -(\alpha_M(m)\star [n',n]+{^{n'}m} \star \alpha_N(n))=-{^{n'}}(m\star n)=- {^{n'}t},
 \end{array}
 \]
and
\[
\begin{array}{rl}
\alpha(m')\star \psi_N(t)= & \alpha(m')\star \psi_N(m \star n)=\alpha(m')\star ^mn\\
= & [m',m]\star \alpha(n)+\alpha(m)\star {^{m'}n}={^{m'}(m\star n)}= {^{m'}t}.
\end{array}
\]
\end{proof}

The non-abelian tensor product of Hom-Lie algebras is symmetric by means of the following isomorphism
\[
(M\star N, \alpha_{M\star N})\overset{\approx}{\longrightarrow} (N\star M, \alpha_{N\star M}), \quad m\star n\mapsto n\star m.
\]
Furthermore, it is functorial in the sense of the following proposition.
\begin{Pro}\label{composition} \cite{CaKhPa}
If $f:(M,\alpha_M)\to (M',\alpha_{M'})$ and $g:(N,\alpha_N)\to (N',\alpha_{N'})$ are homomorphisms of Hom-Lie algebras together with compatible Hom-actions of $(M, \alpha_M)$ (resp. $(M', \alpha_{M'})$ ) and $(N, \alpha_N)$ (resp. $(N', \alpha_{N'})$) on each other such that $f$, $g$ preserve these Hom-actions, that is
\[
f({}^nm)={}^{g(n)}f(m), \quad g({}^mn)={}^{f(m)}g(n), \qquad m\in M,\ n\in N.
\]
Then there is a unique homomorphism of Hom-Lie algebras
\[
f\star g:(M\star N,\alpha_{M\star N})\to (M'\star N',\alpha_{M'\star N'}), \quad \text{given by} \quad (f\star g)(m\star n)=f(m)\star g(n).
\]
Furthermore, if $f$, $g$ are onto then so is $f \star g$.
\end{Pro}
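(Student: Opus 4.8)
The plan is to obtain $f\star g$ from the universal property of the non-abelian tensor product recorded in Lemma~\ref{Lie pairing}. First I would introduce the bilinear map
\[
h : M\times N \to M'\star N', \qquad h(m,n) = f(m)\star g(n),
\]
and prove that it is a Hom-Lie pairing from $(M\times N,\alpha_{M\times N})$ to $(M'\star N',\alpha_{M'\star N'})$. Granting this, Lemma~\ref{Lie pairing} yields a unique homomorphism of Hom-Lie algebras $h^{\ast}:(M\star N,\alpha_{M\star N})\to(M'\star N',\alpha_{M'\star N'})$ with $h^{\ast}(m\star n)=f(m)\star g(n)$, and we set $f\star g:=h^{\ast}$. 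Uniqueness of a homomorphism with the prescribed values on generators is immediate, since the elements $m\star n$ span $M\star N$; in particular no separate verification that $f\star g$ preserves the bracket is needed, as it is a homomorphism by construction.

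The core of the argument is the verification of the four conditions defining a Hom-Lie pairing for $h$. Condition iv), $h\circ(\alpha_M\times\alpha_N)=\alpha_{M'\star N'}\circ h$, follows at once from $f\alpha_M=\alpha_{M'}f$, $g\alpha_N=\alpha_{N'}g$ and the description of $\alpha_{M'\star N'}$ on generators. For condition iii) one computes $h({}^nm,{}^{m'}n')=f({}^nm)\star g({}^{m'}n')={}^{g(n)}f(m)\star{}^{f(m')}g(n')$ using the hypothesis that $f$ and $g$ preserve the mutual Hom-actions, and this equals $-[f(m)\star g(n),f(m')\star g(n')]=-[h(m,n),h(m',n')]$ by the bracket formula of Definition~\ref{Def}. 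Conditions i) and ii) require only a one-line rewriting: for i), $h([m,m'],\alpha_N(n))=[f(m),f(m')]\star\alpha_{N'}(g(n))$, and relation~i) in Definition~\ref{Def} (applied in $M'\star N'$) together with $\alpha_{M'}f=f\alpha_M$ and the action-preservation of $g$ turns this into $f(\alpha_M(m))\star g({}^{m'}n)-f(\alpha_M(m'))\star g({}^mn)=h(\alpha_M(m),{}^{m'}n)-h(\alpha_M(m'),{}^mn)$; condition ii) is handled symmetrically via relation~ii) of Definition~\ref{Def}.

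Finally, for the surjectivity statement: if $f$ and $g$ are onto, then every generator $m'\star n'$ of $M'\star N'$ can be written as $f(m)\star g(n)=(f\star g)(m\star n)$ for suitable $m\in M$, $n\in N$, so the image of $f\star g$ is a linear subspace containing a spanning set of $M'\star N'$, hence equals $M'\star N'$. I expect the only mildly delicate point of the whole proof to be the bookkeeping in conditions i) and ii), where one must invoke the correct defining relation of the target tensor product and the action-compatibility of $f$ and $g$ in the right order; everything else is purely formal.
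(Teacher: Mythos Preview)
Your proof is correct. The paper itself does not supply a proof of this proposition: it is quoted verbatim from \cite{CaKhPa}, so there is nothing to compare against here. Your approach via Lemma~\ref{Lie pairing} is the natural one and works exactly as you outline; the verifications of the four pairing axioms are accurate, and the surjectivity argument is fine since $M'\star N'$ is spanned by elementary tensors.
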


A sort of right exactness property of the non-abelian tensor product of Lie algebras is presented in the following proposition.

\begin{Pro}\label{exact-tensor-1} \cite{CaKhPa}
Let $0\to (M_1,\alpha_{M_1})\overset{f}{\to}(M_2,\alpha_{M_2})\overset{g}{\to}(M_3,\alpha_{M_3})\to 0$ be a short exact sequence of Hom-Lie algebras. Let $(N,\alpha_{N})$ be a Hom-Lie algebra together with compatible Hom-actions of $(N,\alpha_{N})$ and $(M_i,\alpha_{M_i})$ $(i=1,2,3)$ on each other and $f$, $g$ preserve these Hom-actions.
Then there is an exact sequence of Hom-Lie algebras
\[
(M_1\star N,\alpha_{M_1\star N})\overset{f\star \id_N}{\longrightarrow}(M_2\star N,\alpha_{M_2\star N})\overset{g\star \id_N}{\longrightarrow}(M_3\star N,\alpha_{M_3\star N})\longrightarrow 0.
\]
\end{Pro}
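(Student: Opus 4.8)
The plan is to mimic the classical right-exactness argument for the non-abelian tensor product of Lie algebras, adapting each step so that the twisting endomorphisms $\alpha$ are respected. First I would observe that $g \star \id_N$ is onto: since $g$ is surjective, Proposition~\ref{composition} immediately gives that $g\star\id_N$ is surjective. Next, the inclusion $\im(f\star\id_N) \subseteq \Ker(g\star\id_N)$ is clear from functoriality, because $(g\star\id_N)\circ(f\star\id_N) = (g\circ f)\star \id_N = 0 \star \id_N = 0$, using that $g\circ f = 0$.

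The substantive part is the reverse inclusion $\Ker(g\star\id_N) \subseteq \im(f\star\id_N)$. I would prove this by showing that the induced map
\[
\overline{g\star\id_N} : (M_2\star N)/\im(f\star\id_N) \longrightarrow M_3\star N
\]
is an isomorphism, by exhibiting an inverse. Write $I = \im(f\star\id_N)$, which is a Hom-ideal of $(M_2\star N, \alpha_{M_2\star N})$; the quotient $\big((M_2\star N)/I, \overline{\alpha}\big)$ is then a Hom-Lie algebra. To build a map $M_3\star N \to (M_2\star N)/I$, I would use the universal property of the non-abelian tensor product, Lemma~\ref{Lie pairing}: define a bilinear map $h : M_3\times N \to (M_2\star N)/I$ by $h(m_3, n) = \overline{m_2 \star n}$, where $m_2 \in M_2$ is any preimage of $m_3$ under $g$. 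The key point is \emph{well-definedness}: if $g(m_2) = g(m_2') = m_3$, then $m_2 - m_2' = f(m_1)$ for some $m_1 \in M_1$ (exactness at $M_2$), so $m_2\star n - m_2'\star n = f(m_1)\star n \in I$; here one must also check that the Hom-actions of $N$ on $M_1$, $M_2$ are compatible with $f$ so that $f(m_1)\star n$ really lies in $\im(f\star\id_N)$, which is part of the hypotheses. Then I would verify that $h$ satisfies the four axioms (i)--(iv) of a Hom-Lie pairing; each is a routine translation of the corresponding defining relation (i)--(v) in Definition~\ref{Def} of $M_2\star N$ together with the fact that $g$ preserves brackets, $\alpha$'s, and Hom-actions — for instance axiom (iv), $h\circ(\alpha_{M_3}\times\alpha_N) = \overline{\alpha}\circ h$, follows since $\alpha_{M_2}(m_2)$ is a preimage of $\alpha_{M_3}(m_3)$ and $\alpha_{M_2\star N}$ descends to $\overline{\alpha}$.

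Lemma~\ref{Lie pairing} then yields a unique homomorphism $h^\ast : M_3\star N \to (M_2\star N)/I$ with $h^\ast(m_3\star n) = \overline{m_2\star n}$. Finally I would check that $h^\ast$ and $\overline{g\star\id_N}$ are mutually inverse: on generators, $\overline{g\star\id_N}(h^\ast(m_3\star n)) = \overline{g\star\id_N}(\overline{m_2\star n}) = g(m_2)\star n = m_3\star n$, and conversely $h^\ast(\overline{g\star\id_N}(\overline{m_2\star n})) = h^\ast(g(m_2)\star n) = \overline{m_2\star n}$; since both composites fix a generating set they are the respective identities. Hence $\overline{g\star\id_N}$ is an isomorphism, so $\Ker(g\star\id_N) = I = \im(f\star\id_N)$, completing the proof of exactness. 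The main obstacle is the well-definedness of $h$ and the verification that it is a genuine Hom-Lie pairing — everything else is formal — and the one genuinely Hom-specific subtlety is making sure the $\alpha$-compatibility axiom (iv) and the action-compatibility conditions propagate correctly through the choice of preimages.
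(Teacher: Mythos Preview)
Your argument is correct and follows the standard right-exactness pattern. Note, however, that the paper does not actually prove this proposition: it is quoted from \cite{CaKhPa} without proof. That said, the paper does prove the more general Proposition~\ref{exact-tensor-3}, and the method there is precisely the one you outline --- surjectivity of the right-hand map via Proposition~\ref{composition}, the easy inclusion $\im\subseteq\Ker$ from functoriality, verification that the image of the left-hand map is a Hom-ideal, and then construction of an inverse to the induced quotient map by exhibiting a Hom-Lie pairing and invoking Lemma~\ref{Lie pairing}.

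One step you pass over quickly deserves comment: the claim that $I=\im(f\star\id_N)$ is a Hom-ideal of $M_2\star N$. This is true, but to see it one computes, for a generator $m_2'\star n'$ of $M_2\star N$ and $f(m_1)\star n\in I$,
\[
[m_2'\star n',\, f(m_1)\star n] \;=\; -{}^{n'}m_2'\star{}^{f(m_1)}n \;=\; {}^{n}f(m_1)\star{}^{m_2'}n' \;=\; f({}^{n}m_1)\star{}^{m_2'}n' \in I,
\]
using relation \textit{iv)} of Definition~\ref{Def} and the hypothesis that $f$ preserves the Hom-action of $N$. In the paper's proof of Proposition~\ref{exact-tensor-3} the analogous Hom-ideal verification is the most laborious step and is where the extra surjectivity hypotheses on $\alpha_L$, $\alpha_K$ are spent; in the present one-sided situation no such hypothesis is needed, so your omission is harmless.
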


Now we give a more general result needed in the sequel.

\begin{Pro}\label{exact-tensor-3}
Let
\[0\longrightarrow (M,\alpha_{M})\overset{i_1}{\longrightarrow}(L,\alpha_{L})\overset{\sigma_1}{\longrightarrow}(P,\alpha_{P})\longrightarrow 0\]
 \[0\longrightarrow (N,\alpha_{N})\overset{i_2}{\longrightarrow}(K,\alpha_{K})\overset{\sigma_2}{\longrightarrow}(Q,\alpha_{Q})\longrightarrow 0\]
be two short exact sequences of Hom-Lie algebras. Suppose that $\alpha_L$ and $\alpha_K$ are surjective endomorphisms,   $(L,\alpha_{L})$ and $(K,\alpha_{K})$, as well as $(P,\alpha_{P})$ and $(Q,\alpha_{Q})$ act compatibly on each other, and all $\sigma_1$, $\sigma_2$, $\alpha_L$ and $\alpha_K$   preserve the corresponding Hom-actions. Then there is an exact sequence of Hom-Lie algebras
 \[(M \star K,\alpha_{M \star K})\rtimes (L \star N,\alpha_{L \star N})\overset{\eta}{\longrightarrow}(L \star K,\alpha_{L \star K})\overset{\sigma_1 \star \sigma_2}{\longrightarrow}(P \star Q,\alpha_{P \star Q})\longrightarrow 0 \  .\]
\end{Pro}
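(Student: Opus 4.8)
The goal is to produce an exact sequence
\[
(M \star K)\rtimes (L \star N)\overset{\eta}{\longrightarrow}(L \star K)\overset{\sigma_1 \star \sigma_2}{\longrightarrow}(P \star Q)\longrightarrow 0,
\]
so there are three things to check: surjectivity of $\sigma_1\star\sigma_2$, construction of the map $\eta$ as a homomorphism, and exactness at $L\star K$, i.e. $\im(\eta)=\Ker(\sigma_1\star\sigma_2)$. First I would dispose of surjectivity: since $\sigma_1$ and $\sigma_2$ are onto, Proposition \ref{composition} gives that $\sigma_1\star\sigma_2$ is onto. Next I would build $\eta$. The natural candidate on generators is
\[
\eta\bigl((m\star k),\,(l\star n)\bigr)= (i_1\star\id_K)(m\star k)+(\id_L\star i_2)(l\star n)\in L\star K,
\]
i.e. identify $M\star K$ and $L\star N$ with their images in $L\star K$ under the maps induced functorially by the inclusions $i_1$, $i_2$ (using Proposition \ref{composition}; the Hom-action compatibility hypotheses are exactly what make $i_1\star\id_K$ and $\id_L\star i_2$ well-defined homomorphisms). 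To see that $\eta$ is a homomorphism out of the semidirect product one must check it respects the semidirect-product bracket; this reduces to showing that the image of $L\star N$ acts on the image of $M\star K$ inside $L\star K$ in the way dictated by the chosen Hom-action of $(L\star N)$ on $(M\star K)$ — and here I would use Proposition \ref{action-on-tensor}(b) together with Corollary \ref{Corollary 4.3} to rewrite brackets of the form $[\,i_1(m)\star k,\ l\star i_2(n)\,]=-{}^{k}i_1(m)\star{}^{l}i_2(n)$ in terms of the actions, using surjectivity of $\alpha_L,\alpha_K$ to write arbitrary elements as $\alpha$-images.

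**Main step: exactness at $L\star K$.** The inclusion $\im(\eta)\subseteq\Ker(\sigma_1\star\sigma_2)$ is immediate since $\sigma_1 i_1=0$ and $\sigma_2 i_2=0$, hence $(\sigma_1\star\sigma_2)\circ(i_1\star\id_K)=0$ (because $\sigma_1\star\sigma_2$ and $\sigma_1\star(\sigma_2 i_2)=\sigma_1\star 0$ agree by functoriality, and $\sigma_2 i_2 = 0$ forces the composite to land in the subspace that was quotiented out) — more carefully, $(\sigma_1\star\sigma_2)(i_1(m)\star k) = 0\star\sigma_2(k)=0$ and similarly $(\sigma_1\star\sigma_2)(l\star i_2(n))=\sigma_1(l)\star 0 = 0$. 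For the reverse inclusion, the strategy is the standard one for right-exactness of tensor-type constructions: take $t\in L\star K$ with $(\sigma_1\star\sigma_2)(t)=0$; I would first apply Proposition \ref{exact-tensor-1} (right exactness in the first variable) to the first short exact sequence tensored with $(K,\alpha_K)$, obtaining that $\Ker(\sigma_1\star\id_K)$ is the image of $M\star K\to L\star K$; similarly for the second variable. Then one factors $\sigma_1\star\sigma_2 = (\id_P\star\sigma_2)\circ(\sigma_1\star\id_K)$ and does a diagram chase: if $(\sigma_1\star\sigma_2)(t)=0$ then $(\sigma_1\star\id_K)(t)\in\Ker(\id_P\star\sigma_2)$, which by Proposition \ref{exact-tensor-1} applied in the second variable over $(P,\alpha_P)$ equals the image of $P\star N$; lift this preimage back through $\sigma_1\star\id_N$ (surjective on $P\star N$) to an element of $L\star N$, subtract its image in $L\star K$ from $t$, and conclude the difference lies in $\Ker(\sigma_1\star\id_K)=\im(M\star K)$. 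This shows $t\in\im(\eta)$.

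**Where the difficulty lies.** I expect the routine-but-fiddly part to be verifying that $\eta$ is genuinely a Hom-Lie algebra homomorphism on the semidirect product $(M\star K)\rtimes(L\star N)$ — in particular that the cross term in the semidirect-product bracket maps correctly, which forces one to pin down precisely which of the two a priori different Hom-actions (of $L$ on $M\star K$ versus of $N$... etc.) is being used and to reconcile it via Proposition \ref{action-on-tensor}(b,e) and Corollary \ref{Corollary 4.3}; the surjectivity of $\alpha_L$ and $\alpha_K$ is used precisely here, to express a bare generator $m\star k$ as $\alpha_M(m_0)\star\alpha_K(k_0)$ so that the action formulas (which always produce $\alpha$-twisted outputs) apply. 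The genuinely conceptual step is the diagram chase for exactness; everything there follows formally from two applications of Proposition \ref{exact-tensor-1} plus the factorization $\sigma_1\star\sigma_2=(\id_P\star\sigma_2)(\sigma_1\star\id_K)$, so the main obstacle is bookkeeping rather than a new idea. Finally I would remark that $\eta$ need not be injective and no claim is made about exactness at the left-hand term, consistent with the "right exactness" nature of the statement.
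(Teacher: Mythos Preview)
Your diagram-chase strategy for exactness at $L\star K$ has a genuine gap: the factorization $\sigma_1\star\sigma_2 = (\id_P\star\sigma_2)\circ(\sigma_1\star\id_K)$ passes through the object $P\star K$, and this tensor product need not exist under the stated hypotheses. To form $P\star K$ you need compatible mutual Hom-actions of $(P,\alpha_P)$ and $(K,\alpha_K)$; in particular you need an action of $P$ on $K$. The only candidate is to push the $L$-action on $K$ down along $\sigma_1$, but that requires ${}^MK=0$. All the hypotheses give you is that $\sigma_2$ preserves actions, so $\sigma_2({}^mk)={}^{\sigma_1(m)}\sigma_2(k)={}^0\sigma_2(k)=0$, i.e.\ ${}^MK\subseteq N$ --- not zero in general. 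The symmetric attempt through $L\star Q$ fails for the same reason (${}^NL\subseteq M$, not $0$). Consequently Proposition~\ref{exact-tensor-1} cannot be invoked ``in the second variable over $(P,\alpha_P)$'' as you propose, and the chase does not go through.

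The paper avoids this obstacle entirely. Instead of factoring through an intermediate tensor product, it (i) verifies directly, via an explicit bracket computation using the surjectivity of $\alpha_L$ and $\alpha_K$, that $\im(\eta)$ is a Hom-ideal of $L\star K$; and then (ii) constructs the inverse to the induced map $(L\star K)/\im(\eta)\to P\star Q$ by exhibiting a well-defined Hom-Lie pairing $h:P\times Q\to (L\star K)/\im(\eta)$, $h(\sigma_1(l),\sigma_2(k))=l\star k+\im(\eta)$, and applying Lemma~\ref{Lie pairing}. So the surjectivity hypothesis is used not where you locate it (in checking $\eta$ is a homomorphism) but in step (i), to rewrite an arbitrary $l_1\star k_1$ as $\alpha_L(l_1')\star\alpha_K(k_1')$ so that the defining relations of the tensor product can be applied to show $[l_1\star k_1,\eta(\cdot)]\in\im(\eta)$.

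A secondary discrepancy: the paper's $\eta$ is $\eta(m\star k,\,l\star n)=\eta_1(m\star k)+\alpha_{L\star K}\!\circ\eta_2(l\star n)$, with an $\alpha$-twist on the second summand that your formula omits. Since $\alpha_{L\star K}$ is surjective this does not affect $\im(\eta)$, but it is needed for $\eta$ to respect the semidirect-product bracket (recall that bracket is $[(m_1,x_1),(m_2,x_2)]=([m_1,m_2]+{}^{\alpha(x_1)}m_2-{}^{\alpha(x_2)}m_1,[x_1,x_2])$, already carrying an $\alpha$ on the acting factor).
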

\begin{proof}
The Hom-action of $(L,\alpha_{L})$ on $(K,\alpha_{K})$ restricts to a Hom-action of  $(L,\alpha_{L})$ on $(N,\alpha_{N})$, and also to a Hom-action of $(M,\alpha_{M})$ on $(K,\alpha_{K})$. Analogously,
the Hom-action of $(K,\alpha_{K})$ on $(L,\alpha_{L})$ restricts to a Hom-action of  $(K,\alpha_{K})$ on $(M,\alpha_{M})$, and also to a Hom-action of $(N,\alpha_{N})$ on $(L,\alpha_{L})$. Since  actions are compatible, then the non-abelian tensor products $(M \star K,\alpha_{M \star K})$ and $(L \star N,\alpha_{L \star N})$ can be considered. Furthermore, there is a Hom-action of $(L \star N,\alpha_{L \star N})$ on $(M \star K,\alpha_{M \star K})$  given on generators by
\[
^{(l \star n)}(m \star k) = - ^nl \star ^mk,
\]
for any $m \in M$, $k \in K$, $l \in L$, $n \in N$. Via this Hom-action the semi-direct product
$(M \star K,\alpha_{M \star K})\rtimes (L \star N,\alpha_{L \star N})$ is defined.

Thanks to Proposition \ref{composition}, there is a homomorphism of Hom-Lie algebras $\sigma_1 \star \sigma_2 : (L \star K,\alpha_{L \star K}) {\to}(P \star Q,\alpha_{P \star Q})$, which is clearly a surjection.
Let $\eta_1 = i_1 \star \Id : (M \star K,\alpha_{M \star K}) \to (L \star K,\alpha_{L \star K})$ and $\eta_2 = \Id \star i_2 : (L \star N,\alpha_{L \star N}) \to (L \star K,\alpha_{L \star K})$ be the homomorphisms provided again by Proposition \ref{composition}. Define the homomorphism of Hom-Lie algebras
\[
 \eta : \left( M \star K,\alpha_{M \star K})\rtimes (L \star N,\alpha_{L \star N} \right)  {\to} ( L \star K,\alpha_{L \star K})
 \]
by $\eta (m \star k,  l \star n) =  \eta_1(m \star k) + \alpha_{L \star K} \circ \eta_2(l \star n)$.

Now it remains to show that $\im(\eta) = \Ker(\sigma_1 \star \sigma_2)$. Obviously $(\sigma_1 \star \sigma_2) \circ \eta = 0$ and hence we have $\im(\eta)\subseteq \Ker(\sigma_1 \star \sigma_2)$. We claim that $\im(\eta)$ is a Hom-ideal of  $(L \star K,\alpha_{L \star K})$. Indeed, since $\alpha_L$ and $\alpha_K$ are surjective homomorphisms, for any $l_1\in L$ and $k_1\in K$, there exist $l'_1\in L$ and $k'_1\in K$ such that $\alpha_L(l'_1)=l_1$ and $\alpha_K(k'_1)=k_1$.  Then, just by using defining relations of the non-abelian tensor product, we get
\[
\begin{array}{lcl}
[l_1\star k_1, \eta \left( m \star k,  l \star n \right)] &=& [l_1 \star k_1, m \star k] + [l_1 \star k_1, \alpha_L(l) \star \alpha_N(n)]  \\
 & = & -{^{k_1}{l_1}}\star {^m}k - {^{k_1}{l_1}}\star {^{\alpha_L(l)}}\alpha_N(n) \\
  & = & -{^{\alpha(k'_1)}}{\alpha_L(l'_1)} \star {^m}k - {^{\alpha(k'_1)}}{\alpha_L(l'_1)} \star {^{\alpha_L(l)}}\alpha_N(n) \\
 & = & - [^{k'_1}l'_1, m] \star \alpha_K(k) - \alpha_M(m) \star ^{(^{k'_1} l'_1)}k  \\
  &  & - [^{k'_1}l'_1, \alpha_L(l)] \star \alpha^2_N(n) - \alpha^2_L(l) \star ^{(^{k'_1}l'_1)}\alpha(n)\\
& = & ^{(^{l'_1}k'_1)}m \star \alpha_K(k)  - \alpha_M(m) \star ^{(^{k'_1}l'_1)} k  \\
&  &  + ^{(^{l'_1}k'_1)} \alpha(l) \star \alpha^2_N(n)  - \alpha^2_L(l) \star  ^{(^{k'_1}l'_1)} \alpha(n) \in \im(\eta).
 \end{array}
 \]
Thus, we get a natural surjective homomorphism \[
\theta:(L \star K, \alpha_{L \star K})/\im(\eta) \longrightarrow (P \star Q, \alpha_{P \star Q}).
\]

On the other hand, the map $h:(P \times Q, \alpha_{P\times Q})\to (L \star K, \alpha_{L \star K})/\im(\eta)$ given by $h(\sigma_1(l), \sigma_2(k))=l \star k + \im(\eta)$ is a well-defined Hom-Lie pairing.
 Thus, by Lemma \ref{Lie pairing}, $h$ induces a homomorphism  ${\theta'}:(P \star Q,\alpha_{P \star Q})\rightarrow (L \star K, \alpha_{L \star K})/\im(\eta)$, $\theta'(\sigma_1(l), \sigma_2(k))=l \star k + \im(\eta)$, which is the two sided inverse to $\theta$. Thus, $\theta$ is an isomorphism. This completes the proof.
 \end{proof}

\begin{Rem}
When all $\alpha$ endomorphisms in Proposition \ref{exact-tensor-3} are identity maps, i. e. all Hom-Lie algebras are Lie algebras, then \cite[Proposition 9]{El1} is recovered.
\end{Rem}

\begin{note}
Let $(M, \alpha_M)$ and $(N, \alpha_N)$ be two Hom-Lie algebras acting compatibly on each other. We denote by ${^NM}$ (respectively, ${^MN}$) the subspace of $M$ (resp. $N$) generated by the elements of the form ${^{n}m}$ (resp. ${^{m}n}$), with $m\in M$ and $n \in N$.
 It follows from the compatibility conditions that $(^NM,\alpha_{M \mid })$ and $(^MN,\alpha_{N \mid})$ are Hom-ideals of $(M, \alpha_M)$ and $(N, \alpha_N)$, respectively.

  Note that when $M=N$ and the Hom-actions are given by the structural bracket operation as in Example \ref{example_action}, then we have
$\left( {^MM},\alpha_{M\mid} \right) = \left( [M,M],\alpha_{M\mid} \right)$.
\end{note}

\begin{Pro} \label{product}
Let $\left( A,\alpha _{A}\right)$, $\left( B,\alpha_{B}\right)$ and $\left( C,\alpha _{C}\right)$ be Hom-Lie algebras and the following three conditions are satisfied:
\begin{enumerate}
\item[1)] $\left( A,\alpha _{A}\right)$ and $\left( B,\alpha _{B}\right)$ act compatibly on each other, and  $\left( A,\alpha _{A}\right)$ and $\left( C,\alpha _{C}\right)$ act compatibly on each other as well;

\item[2)] $^{\alpha _{B}\left( b\right) }\left( ^{c}a\right) = ~^{\alpha_{C}\left( c\right) }\left( ^{b}a\right)$, for all $ a \in A$, $b \in B$, $c \in C$;

\item[3)] The canonical maps $(^{C}A,\alpha_{A\mid}) \star (\alpha_B(B), \alpha_{B\mid}) \longrightarrow  (A, \alpha_A) \star (\alpha_B(B),\alpha_{B\mid})$ and $(^{B}A,\alpha_{A\mid}) \star (\alpha_C(C),  \alpha_{ C\mid}) \longrightarrow  (A,\alpha_A) \star (\alpha_C(C), \alpha_{C\mid })$ are trivial, and the induced Hom-actions of $^BA $ on $C$ and of $^CA$ on $B$ are also trivial.

\end{enumerate}
 Then there is an  isomorphism of Hom-Lie algebras
 \[
 \left( A,\alpha _{A}\right) \star \left( \left( B,\alpha _{B}\right) \times \left( C,\alpha _{C}\right)\right) \cong \left( \left( A,\alpha _{A}\right) \star \left( B,\alpha _{B}\right) \right) \times \left( \left( A,\alpha _{A}\right) \star \left( C,\alpha _{C}\right)\right).
 \]
\end{Pro}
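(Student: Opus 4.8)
\medskip
\noindent\emph{Plan of proof.} The strategy is to realise $(A,\alpha_A)\star\bigl((B,\alpha_B)\times(C,\alpha_C)\bigr)$ as an internal direct product of two Hom-subalgebras isomorphic to $A\star B$ and $A\star C$. Write $(D,\alpha_D):=(B,\alpha_B)\times(C,\alpha_C)$, so that the underlying space is $B\oplus C$ and $\alpha_D(b,c)=(\alpha_B(b),\alpha_C(c))$, and define mutual maps between $A$ and $D$ by ${}^{(b,c)}a:={}^{b}a+{}^{c}a$ and ${}^{a}(b,c):=({}^{a}b,{}^{a}c)$. The first task is to check that these are compatible Hom-actions, so that $(A\star D,\alpha_{A\star D})$ is defined. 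Expanding the conditions of Definition~\ref{action} along the two summands, the pure $B$- and $C$-parts are exactly the axioms furnished by hypothesis~1); the cross-terms occurring in axiom a) have the shape ${}^{\alpha_B(b)}({}^{c}a)$ and ${}^{\alpha_C(c)}({}^{b}a)$ and cancel in pairs precisely by hypothesis~2); and the cross-terms obstructing compatibility are ${}^{({}^{c}a)}b$ and ${}^{({}^{b}a)}c$, which vanish by the triviality statements in hypothesis~3).

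The second task is to construct the comparison homomorphism $\Phi=(\Phi_B,\Phi_C)\colon (A\star D,\alpha_{A\star D})\to(A\star B,\alpha_{A\star B})\times(A\star C,\alpha_{A\star C})$, where $\Phi_B$ is induced, through Lemma~\ref{Lie pairing}, by the bilinear map $A\times D\to A\star B$, $(a,(b,c))\mapsto a\star b$, and $\Phi_C$ similarly by $(a,(b,c))\mapsto a\star c$. The projection $B\times C\to B$ does \emph{not} intertwine the $D$-action and the $B$-action on $A$, so Proposition~\ref{composition} does not apply directly, and one must verify the four Hom-Lie pairing conditions by hand; this is the technical heart of the argument. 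After splitting along the $B$- and $C$-summands, the $B$-contributions reproduce defining relations i) and ii) of $A\star B$, and the leftover $C$-contributions are annihilated exactly by hypotheses~2) and 3) --- in particular the triviality of the canonical map $({}^{C}A)\star(\alpha_B(B))\to A\star(\alpha_B(B))$ is what removes the spurious terms of the form ${}^{c}a\star\alpha_B(b)$. In the other direction, the inclusions $j_B\colon B\hookrightarrow D$ and $j_C\colon C\hookrightarrow D$ preserve all the Hom-actions present, so Proposition~\ref{composition} yields homomorphisms $q_B=\Id_A\star j_B\colon A\star B\to A\star D$ and $q_C=\Id_A\star j_C\colon A\star C\to A\star D$ with no extra hypotheses.

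Finally I would assemble the isomorphism. Set $T_B:=\im(q_B)$ and $T_C:=\im(q_C)$; these are Hom-subalgebras of $(A\star D,\alpha_{A\star D})$, and since the underlying space of $D$ is $B\oplus C$, bilinearity of $\otimes$ gives $a\star(b,c)=a\star(b,0)+a\star(0,c)$ on generators, whence $A\star D=T_B+T_C$ as vector spaces. From the relations $\Phi_B\circ q_B=\Id_{A\star B}$, $\Phi_C\circ q_C=\Id_{A\star C}$ and $\Phi_C\circ q_B=0=\Phi_B\circ q_C$, all immediate on generators, one reads off that $q_B$ and $q_C$ are injective and, applying $\Phi_B$ to an element of $T_B\cap T_C$, that $T_B\cap T_C=0$; hence $q_B$ and $q_C$ identify $A\star B$ and $A\star C$ with $T_B$ and $T_C$. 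For $t_B\in T_B$ and $t_C\in T_C$, the bracket formula of Definition~\ref{Def} puts $[t_B,t_C]$ into $T_C$, while skew-symmetry puts it into $T_B$, so $[T_B,T_C]\subseteq T_B\cap T_C=0$. As moreover $\alpha_{A\star D}(T_B)\subseteq T_B$ and $\alpha_{A\star D}(T_C)\subseteq T_C$, the decomposition $A\star D=T_B\oplus T_C$ is one of Hom-Lie algebras, and the resulting isomorphism $(A\star D,\alpha_{A\star D})\to(A\star B,\alpha_{A\star B})\times(A\star C,\alpha_{A\star C})$ --- which is precisely $\Phi$, since $q_B^{-1}=\Phi_B|_{T_B}$ and $q_C^{-1}=\Phi_C|_{T_C}$ --- is the one claimed.

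I expect the main obstacle to be the verification of the Hom-Lie pairing conditions for $\Phi_B$ and $\Phi_C$: this is where the $\alpha$-twists interact with the action-theoretic hypotheses, and keeping track of which block of terms is absorbed by which defining relation of $A\star B$, and confirming that the residue is exactly what hypotheses~2) and 3) kill, is the only genuinely delicate point. The compatibility check and the vector-space and skew-symmetry arguments are routine.
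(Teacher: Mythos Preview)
Your proposal is correct and follows essentially the same strategy as the paper: set up compatible Hom-actions between $A$ and $B\times C$, build the forward map $\Phi$ from the Hom-Lie pairing $(a,(b,c))\mapsto(a\star b,\,a\star c)$ via Lemma~\ref{Lie pairing}, build the backward maps from the inclusions $B,C\hookrightarrow B\times C$, and check they are mutual inverses. The only differences are cosmetic: the paper obtains your $q_B,q_C$ again through Lemma~\ref{Lie pairing} (as $\psi_1^\star,\psi_2^\star$) rather than by functoriality, and then simply checks $\phi^\star\circ\psi^\star=\Id$ and $\psi^\star\circ\phi^\star=\Id$ on generators, whereas you run the equivalent internal direct-sum argument with the explicit verification that $[T_B,T_C]=0$; your version makes more visible where each hypothesis is consumed, but the two arguments are interchangeable.
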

\begin{proof}
First let us remark  that $ \left( B,\alpha _{B}\right) \times \left( C,\alpha _{C}\right)$ denotes the Hom-Lie algebra $\left( B\times C,\alpha _{B\times C}\right)$, with the endomorphism $\alpha _{B\times C}:B\times C\to B\times C $ naturally induced by $\alpha_B$ and $\alpha_C$.
There are mutual compatible Hom-actions of $\left( B\times C,\alpha _{B\times C}\right)$ and  $\left( A,\alpha _{A}\right)$ on each other given for all $a \in A$, $b\in B$, $c \in C$ by $^{\left( b,c\right)}a= ~{^{b}a} + {^{c}a}$ and $^a(b,c) = ({^ab}, {^ac})$, respectively. Hence  $\left( A,\alpha _{A}\right) \star \left( \left( B,\alpha _{B}\right) \times \left( C,\alpha _{C}\right)\right)$ can be considered.

Now we define the map
\[
\phi : \left( A,\alpha _{A}\right) \times \left( B \times  C, \alpha _{B \times C}\right) \to \left( \left( A,\alpha _{A}\right) \star \left( B,\alpha _{B}\right) \right) \times \left( \left( A,\alpha _{A}\right) \star \left( C,\alpha _{C}\right)\right)
 \]
 by $\phi(a, (b,c)) = (a \star b, a \star c)$. Keeping in mind the conditions {\it 2)} and {\it 3)},  it is straightforward to verify that $\phi$ is a Hom-Lie pairing, so Lemma \ref{Lie pairing} provides a unique homomorphism
\[
\phi^{\star} : \left( A,\alpha _{A}\right) \star \left( B \times  C, \alpha _{B \times C}\right) \to \left( \left( A,\alpha _{A}\right) \star \left( B,\alpha _{B}\right) \right) \times \left( \left( A,\alpha _{A}\right) \star \left( C,\alpha _{C}\right)\right)
\]
 given by $\phi^{\star}(a \star (b,c)) = (a \star b, a \star c)$.

On the other hand, the maps
\[
\psi_1 : \left( A,\alpha _{A}\right) \times \left( B,\alpha _{B}\right) \to \left( A,\alpha _{A}\right) \star \left( B \times  C, \alpha _{B \times C}\right),
\quad \psi_1(a,b) = a \star (b,0),
\]
 and
\[
\psi_2 : \left( A,\alpha _{A}\right) \times \left( C,\alpha _{C}\right) \to \left( A,\alpha _{A}\right) \star \left( B \times  C, \alpha _{B \times C}\right), \quad
\psi_2(a,c) = a \star (0,c)
\]
 are Hom-Lie pairings, so again  thanks to Lemma \ref{Lie pairing} they induce homomorphisms $\psi_1^{\star} : \left( A,\alpha _{A}\right) \star \left( B,\alpha _{B}\right) \to \left( A,\alpha _{A}\right) \star \left( B \times  C, \alpha _{B \times C}\right)$, $\psi_1^{\star}(a \star b) = a \star (b,0)$, and
$\psi_2^{\star} : \left( A,\alpha _{A}\right) \star \left( C,\alpha _{C}\right) \to \left( A,\alpha _{A}\right) \star \left( B \times  C, \alpha _{B \times C}\right)$, $\psi_2^{\star}(a \star c) = a \star (0,c)$. Both homomorphisms induce the homomorphism
\[\psi^{\star} = ( \psi_1^{\star}, \psi_2^{\star} ) : \big(\left( A,\alpha _{A}\right) \star \left( B,\alpha _{B}\right)\big) \times \big(\left( A,\alpha _{A}\right) \star \left( C,\alpha _{C}\right)\big) \to \left( A,\alpha _{A}\right) \star \left( B \times  C, \alpha _{B \times C}\right).
\]
 It is an easy task to verify that $\phi^{\star}$ and $\psi^{\star}$ are inverses to each other. This completes the proof.
 \end{proof}

\begin{Rem}
In the  case $\alpha_A = \Id\mathbf{}_A$, $\alpha_B = \Id_B$ and  $\alpha_C = \Id_C$, Proposition \ref{product} recovers the similar result for Lie algebras in \cite[Lemma 7]{El1}.
\end{Rem}

\begin{Ex} Examples of Hom-Lie algebras satisfying the requirements of Proposition \ref{product} are:
\begin{enumerate}
\item[a)] Metabelian Lie algebras $A = B = C$ together with $\alpha_A=\alpha_B=\alpha_C=0$ and the Hom-actions given by the structural bracket.
\item[b)] Hom-Lie algebras $(A, \alpha_A)$, $(B, \alpha_B)$, $(C,\alpha_C)$  together with trivial Hom-actions of $(A,  \alpha_A)$ and $(B, \alpha_B)$ on each other, and trivial Hom-actions of $(A,  \alpha_A)$ and $(C, \alpha_C)$ on each other.
\end{enumerate}
\end{Ex}

Let $\left( G,\alpha _{G}\right)$ and $\left( H,\alpha _{H}\right)$ be Hom-Lie algebras with compatible Hom-actions on each other, and let $\left( K,\alpha_{K}\right)$ be a Hom-ideal of both $\left( G,\alpha _{G}\right)$ and $\left( H,\alpha _{H}\right)$. For example, this is the case if $\left( G,\alpha _{G}\right)$, $\left( H,\alpha _{H}\right)$ and $\left( K,\alpha_{K}\right)$ all are Hom-ideals of a Hom-Lie algebra and $K\subseteq G\cap H$.
Let us consider the homomorphisms $\iota_1=\inc \star \Id: (K \star H, \alpha_{K \star H}) \rightarrow (G \star H, \alpha_{G \star H})$ and
$\iota_2=\Id \star \inc  :(G \star K, \alpha_{G \star K}) \rightarrow (G \star H, \alpha_{G \star H}))$ and introduce the following notations
  $K_1 = \im(\iota_1)$ and $K_2 = \im(\iota_2)$. Then both $(K_1, \alpha_{G \star H \mid})$ and $(K_2, \alpha_{G \star H \mid})$ are Hom-ideals of $\left( G \star H,\alpha _{G \star H} \right)$.

\begin{Pro}\label{quatient} There is an isomorphism of Hom-Lie algebras
\[
 \phi :\left( \frac{G}{K},  \overline{\alpha}_{G} \right)  \star \left( \frac{H}{K},\overline{ \alpha}_{H}\right) \overset{\cong}{\longrightarrow} \left( \frac{ G\star H}{K_{1} + K_{2}}, \overline{\alpha}_{G \star H} \right)
\]
defined by  $ \phi \left( (g+K) \star (h+K) \right) =  g\star h + \left( K_{1}+K_{2}\right)$,
where $\overline{\alpha}$ denotes the  endomorphism induced by $\alpha$ on the quotient.
\end{Pro}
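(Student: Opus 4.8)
The plan is to obtain $\phi$ from the universal property of the non-abelian tensor product (Lemma \ref{Lie pairing}) and its inverse from functoriality (Proposition \ref{composition}), and then to check that the two composites restrict to the identity on generators. First I would record that both sides of the asserted isomorphism are meaningful: since $(K,\alpha_K)$ is a Hom-ideal of both $(G,\alpha_G)$ and $(H,\alpha_H)$, the mutual Hom-actions restrict to $K$ and descend to the quotients, yielding compatible Hom-actions of $(G/K,\overline\alpha_G)$ and $(H/K,\overline\alpha_H)$ on each other via ${}^{\overline g}\overline h=\overline{{}^g h}$ and ${}^{\overline h}\overline g=\overline{{}^h g}$, so that $(G/K)\star(H/K)$ is defined; and on the other side $K_1$ and $K_2$ are Hom-ideals of $(G\star H,\alpha_{G\star H})$ (as recalled just before the statement), hence so is $K_1+K_2$, and $(G\star H)/(K_1+K_2)$ is a Hom-Lie algebra.

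Next I would construct $\phi$. Consider the bilinear map $(G/K)\times(H/K)\to(G\star H)/(K_1+K_2)$ sending $(\overline g,\overline h)$ to $g\star h+(K_1+K_2)$. It is well defined, because replacing $g$ by $g+k$ with $k\in K$ changes $g\star h$ by $k\star h=(\inc\star\Id)(k\star h)\in\im(\iota_1)=K_1$, while replacing $h$ by $h+k$ changes it by an element of $\im(\iota_2)=K_2$; either way the class modulo $K_1+K_2$ is unaffected. I would then verify that this map is a Hom-Lie pairing: each of the four pairing axioms is obtained simply by projecting the corresponding defining relation of $G\star H$ (Definition \ref{Def}) into the quotient, after noting that $\overline\alpha_G,\overline\alpha_H$ and the quotient actions are the induced ones. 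Lemma \ref{Lie pairing} then furnishes a unique homomorphism $\phi:(G/K)\star(H/K)\to(G\star H)/(K_1+K_2)$ with $\phi(\overline g\star\overline h)=g\star h+(K_1+K_2)$.

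For the inverse, I would use that the projections $\pi_G:(G,\alpha_G)\to(G/K,\overline\alpha_G)$ and $\pi_H:(H,\alpha_H)\to(H/K,\overline\alpha_H)$ preserve the Hom-actions --- this is exactly how the quotient actions were defined, namely ${}^{\pi_G(g)}\pi_H(h)=\pi_H({}^g h)$ and ${}^{\pi_H(h)}\pi_G(g)=\pi_G({}^h g)$. By Proposition \ref{composition} there is a surjective homomorphism $\pi_G\star\pi_H:(G\star H,\alpha_{G\star H})\to(G/K)\star(H/K)$, $g\star h\mapsto\overline g\star\overline h$. On a generator $k\star h$ of $K_1=\im(\iota_1)$ with $k\in K$ one has $(\pi_G\star\pi_H)(k\star h)=\pi_G(k)\star\overline h=0$, and symmetrically $\pi_G\star\pi_H$ kills $K_2$; hence $K_1+K_2\subseteq\Ker(\pi_G\star\pi_H)$ and $\pi_G\star\pi_H$ induces $\psi:(G\star H)/(K_1+K_2)\to(G/K)\star(H/K)$, $g\star h+(K_1+K_2)\mapsto\overline g\star\overline h$.

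Finally, $\psi\circ\phi$ and $\phi\circ\psi$ fix the generators $\overline g\star\overline h$ and $g\star h+(K_1+K_2)$ respectively, so, being homomorphisms of Hom-Lie algebras, they equal the identity maps; therefore $\phi$ is an isomorphism. I expect the only real work to be the verification that the bilinear map above is a Hom-Lie pairing, but even this should reduce, axiom by axiom, to the defining relations of $G\star H$, so I anticipate no genuine obstacle; the one place where the specific definitions of $K_1,K_2$ as $\im(\iota_1),\im(\iota_2)$ are actually used is exactly the well-definedness check modulo $K_1+K_2$.
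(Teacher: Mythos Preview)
Your argument is correct and, in one respect, cleaner than the paper's. The paper also constructs $\phi$ (asserting well-definedness and surjectivity without spelling out the pairing), but for the other direction it argues \emph{injectivity} directly: given a generator $(g+K)\star(h+K)$ in $\Ker(\phi)$, it writes $g\star h=\iota_1(k_1\star h_1)+\iota_2(g_2\star k_2)$ and concludes that $(g+K)\star(h+K)=0$. Your route instead produces an explicit inverse $\psi$ via Proposition~\ref{composition} applied to the projections $\pi_G,\pi_H$, and then checks $\phi\circ\psi$ and $\psi\circ\phi$ on generators. This buys you something real: since a general element of $\Ker(\phi)$ is a linear combination of generators rather than a single one, the paper's element-wise check implicitly relies on having a homomorphism back to $(G/K)\star(H/K)$ in order to transport the relation $g\star h=\iota_1(\cdots)+\iota_2(\cdots)$ from $G\star H$ to the quotient tensor---which is exactly the map $\psi=\pi_G\star\pi_H$ you make explicit. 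So your version makes the hidden step visible and handles arbitrary kernel elements automatically.
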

\begin{proof}
It is readily checked that $\phi$ is a well-defined surjective homomorphism.
Now, take $\left( g + K\right) \star \left( h + K \right) \in \Ker(\phi)$, then  $g \star h \in K_1 + K_2$. So $g \star h$ can be rewritten as $\iota_1(k_1 \star h_1) + \iota_2 (g_2 \star k_2)$ and, consequently, $(g +K) \star (h +K ) = (k_1 + K) \star (h_1 + K) + (g_2 + K) \star (k_2 + K) = 0$, for $g, g_2 \in G$, $h, h_1 \in H$, $k_1, k_2 \in K$. This means that $\Ker(\phi)=0$ and the proof is  completed.
\end{proof}

\section{On nilpotency of the non-abelian tensor product}

In this section, we prove that the non-abelian tensor product of Hom-Lie algebras $\left( M\star N, \alpha_{M\star N} \right)$  inherits the property of nilpotency, solvability and Engel from the respective property of the Hom-Lie algebra $({^NM}, \alpha_{{M\mid}})$ (or $({^MN}, \alpha_{{^N\mid}})$). The investigations below are extensions of results for Lie algebras given in \cite{STME} to the case of Hom-Lie algebras.

  \begin{De}
We say that a Hom-Lie algebra $(L, \alpha_L)$ is $k$-Engel if $R^k_x=0$ for all $x \in L$, where
\[
R^k_x:L\rightarrow L, \quad y \mapsto\underset {k}{[\dots [[y, \underbrace{x],x], \dots, x]}}
\]
is the $k$-adjoint representation of $(L, \alpha_L)$, defined for $x\in L$.
 \end{De}

\begin{Th} \label{bounds} \
Let  $(M,\alpha_M)$ and $(N,\alpha_N)$ be Hom-Lie algebras with compatible Hom-actions on each other.
 \begin{enumerate}
 \item[a)] If $\left( {^NM},\alpha_{{M\mid}} \right)$  is nilpotent, then so are $\left(M \star N,\alpha_{M \star N} \right)$ and $\left( {^MN},\alpha_{_{N\mid}} \right)$. Furthermore, if  $\left( {^NM},\alpha_{{M\mid}} \right)$ is nilpotent of class $k$, ~then
    \[
     k \leq {\sf ncl} \left( M \star N,  \alpha_{M \star M} \right) \leq k+1 \quad \text{and} \quad {\sf ncl} \left( {^MN}, \alpha_{{^MN}} \right)\leq k+1.
     \]

     Here ${\sf ncl}(-)$ denotes the class of nilpotency.

  \item[b)] Let $(M,\alpha_M)$ and $(N, \alpha_N)$ be Hom-Lie algebras such that  $\alpha_M$ and $\alpha_N$ are surjective endomorphisms  or both $(M,\alpha_M)$ and $(N, \alpha_N)$ satisfy the weak $\alpha$-identity condition (see Remark \ref{conditions}). If
      $\left( {^NM},\alpha_{{M\mid}} \right)$  is solvable, then so are $\left( M \star N,\alpha_{M \star N}  \right)$ and $\left( {^MN},\alpha_{{N\mid}} \right)$. Furthermore, if  $\left( {^NM},\alpha_{{M\mid}} \right)$ is solvable of class $k$, then
      \[
      k \leq {\sf scl} \left( M \star N, \alpha_{M \star N} \right) \leq k+1 \quad \text{and} \quad {\sf scl} \left({^MN}, \alpha_{{N\mid}} \right)\leq k+1.
      \]
    Here ${\sf scl}(-)$ denotes the class of solvability.

  \item[c)] If $\left( {^NM},\alpha_{{M\mid}} \right)$ is $k$-Engel, then $\left( M \star N,\alpha_{M \star N} \right)$ and $\left( {^MN},\alpha_{{N\mid}} \right)$ are $(k+1)$-Engel.
 \end{enumerate}
\end{Th}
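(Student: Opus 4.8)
\emph{Plan of proof.} Everything is driven by the two canonical homomorphisms of Proposition~\ref{action-on-tensor}(a),
\[
\psi_M : (M\star N,\alpha_{M\star N})\longrightarrow ({^NM},\alpha_{M\mid}),\qquad \psi_N : (M\star N,\alpha_{M\star N})\longrightarrow ({^MN},\alpha_{N\mid}),
\]
which are \emph{surjective} (their images are spanned by the elements ${^n}m$, resp. ${^m}n$) and, by Proposition~\ref{action-on-tensor}(c), have central kernel. Thus $(M\star N,\alpha_{M\star N})$ is a central extension of $({^NM},\alpha_{M\mid})$, and $({^MN},\alpha_{N\mid})$ is a quotient of $(M\star N,\alpha_{M\star N})$. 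Since $\psi_M$ is a surjective Hom-Lie homomorphism, it maps the lower central series of $M\star N$ onto that of ${^NM}$ and the derived series of $M\star N$ onto that of ${^NM}$ (both are defined purely by iterated brackets), and the same holds for $\psi_N$. I would prove (a), (b), (c) by combining these facts with Theorems~\ref{nilpotent} and \ref{solvable1}.

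\emph{(a) Nilpotency.} Suppose $({^NM},\alpha_{M\mid})$ is nilpotent of class $k$, so $({^NM})^{[k]}=0\neq({^NM})^{[k-1]}$ by Theorem~\ref{nilpotent}(b). From $\psi_M\bigl((M\star N)^{[k]}\bigr)=({^NM})^{[k]}=0$ we get $(M\star N)^{[k]}\subseteq\Ker(\psi_M)\subseteq\Z(M\star N)$, hence $(M\star N)^{[k+1]}=\bigl[(M\star N)^{[k]},M\star N\bigr]=0$, so $(M\star N,\alpha_{M\star N})$ is nilpotent of class $\le k+1$ by Theorem~\ref{nilpotent}(b). If its class were $\le k-1$ then $0=\psi_M\bigl((M\star N)^{[k-1]}\bigr)=({^NM})^{[k-1]}$, a contradiction; hence the class is $\ge k$. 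Finally $({^MN})^{[k+1]}=\psi_N\bigl((M\star N)^{[k+1]}\bigr)=0$, so $({^MN},\alpha_{N\mid})$ is nilpotent of class $\le k+1$. No hypothesis on the $\alpha$'s is needed, since the lower central series always consists of Hom-ideals.

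\emph{(b) Solvability.} The argument is formally identical to (a), with the derived series $(-)^{(i)}$ in place of the lower central series and Theorem~\ref{solvable1} in place of Theorem~\ref{nilpotent}; this is precisely where the hypothesis is used, because Theorem~\ref{solvable1}(b) requires the relevant $\alpha$-endomorphism to be surjective or the weak $\alpha$-identity to hold. One checks that these conditions are inherited: $\alpha_{M\star N}$ is surjective whenever $\alpha_M,\alpha_N$ are (as $\alpha_{M\star N}(m\star n)=\alpha_M(m)\star\alpha_N(n)$ and the $m\star n$ span $M\star N$), the weak $\alpha$-identity passes from $(M,\alpha_M),(N,\alpha_N)$ to $(M\star N,\alpha_{M\star N})$ by a direct verification on the defining relations of the tensor product, and both conditions restrict to the Hom-ideals ${^NM}$ of $M$ and ${^MN}$ of $N$ (for surjectivity use axiom (c) of Definition~\ref{action}). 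Granting this, $({^NM})^{(k)}=0$ forces $(M\star N)^{(k)}\subseteq\Ker(\psi_M)\subseteq\Z(M\star N)$, hence $(M\star N)^{(k+1)}=\bigl[(M\star N)^{(k)},(M\star N)^{(k)}\bigr]=0$, so ${\sf scl}(M\star N)\le k+1$; the lower bound ${\sf scl}(M\star N)\ge k$ and ${\sf scl}({^MN})\le k+1$ follow exactly as in (a). Alternatively, and without invoking Theorem~\ref{solvable1}, one may pull a minimal abelian series of $({^NM},\alpha_{M\mid})$ back through $\psi_M$ and prepend the step $0\unlhd\Ker(\psi_M)$, which is an abelian step because $\Ker(\psi_M)$ is central; pushing any abelian series of $M\star N$ forward through $\psi_M$ gives the lower bound.

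\emph{(c) Engel.} Let $t,u\in M\star N$. Since $\psi_M$ is a homomorphism of Hom-Lie algebras, an immediate induction on $k$ gives $\psi_M\bigl(R^{k}_{t}(u)\bigr)=R^{k}_{\psi_M(t)}\bigl(\psi_M(u)\bigr)$. As $\psi_M(t),\psi_M(u)\in{^NM}$ and $({^NM},\alpha_{M\mid})$ is $k$-Engel, the right-hand side vanishes, so $R^{k}_{t}(u)\in\Ker(\psi_M)\subseteq\Z(M\star N)$, whence $R^{k+1}_{t}(u)=\bigl[R^{k}_{t}(u),t\bigr]=0$. Thus $(M\star N,\alpha_{M\star N})$ is $(k+1)$-Engel; applying the analogous identity $\psi_N\bigl(R^{k+1}_{t}(u)\bigr)=R^{k+1}_{\psi_N(t)}\bigl(\psi_N(u)\bigr)$ and surjectivity of $\psi_N$ onto ${^MN}$ shows that $({^MN},\alpha_{N\mid})$ is $(k+1)$-Engel as well. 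The only non-formal point in the whole argument is the verification in (b) that the weak $\alpha$-identity is inherited by the non-abelian tensor product; everything else is a direct consequence of Proposition~\ref{action-on-tensor} together with Theorems~\ref{solvable1} and \ref{nilpotent}.
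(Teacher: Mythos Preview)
Your proof is correct and follows the same route as the paper's: both rest on the surjections $\psi_M,\psi_N$ of Proposition~\ref{action-on-tensor} with central kernel, combined with Theorems~\ref{solvable1} and~\ref{nilpotent}. The paper simply declares (b) ``similar to (a) and omitted'' and in (c) verifies the Engel condition only on generators $m\star n$, so your explicit discussion of how the $\alpha$-hypotheses pass to $(M\star N,\alpha_{M\star N})$ (together with the alternative pullback-of-series argument that sidesteps this issue entirely), and your formulation of (c) for arbitrary $t,u\in M\star N$, are in fact more careful than the paper's own proof.
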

\begin{proof}
{\it a)} If $\left( {^NM},\alpha_{{M\mid}} \right)$  is nilpotent of class $k$, then by Theorem \ref{action-on-tensor} \emph{a)} we have the following isomorphisms
 \begin{equation}\label{4.1}
\frac{(M \star N,\alpha_{M \star N})}{\Ker(\psi_M)}\cong \left( {^NM},\alpha_{{M\mid}} \right) ~~   \text{ and} ~~
\frac{(M \star N,\alpha_{M \star N})}{\Ker(\psi_N)}\cong \left( {^MN},\alpha_{{N\mid}} \right).
 \end{equation}
By Theorem \ref{action-on-tensor} \emph{c)} the Hom-ideals $\Ker(\psi_M)$ and $\Ker(\psi_N)$ are in the center of $(M \star N,\alpha_{M \star N})$. Now
\[
\psi_{M}(M \star N, \alpha_{M \star N})^{[k+1]} = \psi_{M}\big[(M \star N, \alpha_{M \star N})^{[k]}, (M \star N, \alpha_{M \star N})\big] \subseteq \left( {^N}M \right)^{[k+1]} = 0.
 \]
So $(M \star N, \alpha_{M \star N})^{[k+1]} \subseteq \Ker(\psi_{M}) \subseteq \Z(M \star N, \alpha_{M \star N})$, then $(M \star N, \alpha_{M \star N})^{[k+2]} = 0$, that is $(M \star N, \alpha_{M \star N})$ is nilpotent Hom-Lie algebra with class of nilpotency at most $k+1$.

On the other hand, if $(M \star N, \alpha_{M \star N})^{[l+1]}=0$, by using the first isomorphism in (\ref{4.1}),  we get
\[
\left( ^{N}{M}, \alpha_{M\mid}\right)^{[l+1]} \cong \left(\frac{(M \star N, \alpha_{M \star N})}{\Ker(\psi_{M})}\right)^{[l+1]} \cong \frac{(M \star N, \alpha_{M \star N})^{[l+1]}}{\Ker(\psi_{M})} = 0.
\]
 Since $\left( ^{N}{M}, \alpha_{M \mid} \right)$ is nilpotent of class $k$, then $k \leq l$.

By the same reason, $\left( ^{M}{N}, \alpha_{N\mid}\right)^{[l+1]}=0$, when $(M \star N, \alpha_{M \star N})^{[l+1]}=0$, that is $\left( ^{M}{N}, \alpha_{N \mid} \right)$ is nilpotent with class of nilpotency $\leq l$, but $l \leq k+1$. Then the proof of {\it a)} is completed.

{\it b)} This is similar to the proof of {\it a)} and will be omitted.

{\it c)} Suppose that $({^N}M, \alpha_{M\mid})$ is  $k$-Engel, then for all $m \star n, m'\star n' \in M \star N$,
$R_{m \star n}^k(m' \star n') = \underset {k}{[\dots [[m'\star n', \underbrace{m\star n],m\star n], \dots, m\star n]}}\in \Ker(\psi_M) \subseteq \Z(M \star N)$. Indeed,
\[
\psi_M\! \underset {k}{\left(\! [\dots\! [[m'\star n', \underbrace{m\star n],m\star n], \!\dots\!, m\star n]}\! \right) }\!= \! \underset {k}{[\dots\! [[-\!^{n'}m', \underbrace{-\!^nm],-\!^nm], \!\dots\!, -\!^nm]}}= 0.
\]
Then $(M \star N)^{[k+1]}=0$ since
\[
\underset {k+1}{[\dots [[m'\star n', \underbrace{m\star n],m\star n], \dots, m\star n]}} = [R_{m \star n}^k(m' \star n'), m \star n]  =0.
\]
 So $(M \star N,\alpha_{M \star N})$ is $(k+1)$-Engel.

On the other hand,
\[
R_{{^mn}}^{k+1}(^{m'}{n'}) = [\dots[[ ^{m'}{n'}, \underbrace{{^mn}], {^mn}], \dots, {^mn}}_{k+1}] =  \psi_N\left(R_{{m \star n}}^{k+1}({m'}\star {n'}) \right) = 0,
\]
 i. e. $\left( {^MN},\alpha_{{N\mid}} \right)$ is $(k+1)$-Engel.
\end{proof}

\begin{Rem}
It is clear that Theorem \ref{bounds} still remains true when $({^NM},\alpha_{M\mid})$ and  $({^MN},\alpha_{N\mid})$ are replaced with each other.
\end{Rem}

\begin{Co} \label{bounds square}
Let  $(M,\alpha_M)$ be a Hom-Lie algebra. Then
\begin{enumerate}
\item[a)] If the derived Hom-subalgebra $([M,M],\alpha_{M\mid})$ is nilpotent of class $k$,  then $(M \star M, \alpha_{M \star M})$ is nilpotent of class $k$ or $k+1$.
\item[b)] Suppose, in addition, $\alpha_M$ is a surjective endomorphism, or  $(M,\alpha_M)$ satisfies the weak $\alpha$-identity condition (see Remark \ref{conditions}) . If the derived  Hom-subalgebra $([M,M],\alpha_{M\mid})$ is solvable of class $k$, then $(M \star M, \alpha_{M \star M})$ is solvable of class $k$ or $k+1$.
\item[c)] If the derived algebra subalgebra $([M,M],\alpha_{M\mid})$ is $k$-Engel,  then $(M \star M, \alpha_{M \star M})$ is $(k+1)$-Engel.
\end{enumerate}
\end{Co}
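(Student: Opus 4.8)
The plan is to obtain the Corollary as a direct specialization of Theorem \ref{bounds} to the case $N = M$. First I would note that $(M, \alpha_M)$ is a Hom-ideal of itself, so by Example \ref{example_action}(2) the structural bracket of $(M, \alpha_M)$ makes $(M, \alpha_M)$ act on itself, and the two (coinciding) mutual Hom-actions obtained in this way are compatible. Hence Theorem \ref{bounds} is applicable with both Hom-Lie algebras taken to be $(M, \alpha_M)$ and the Hom-actions taken to be the adjoint ones. Note also that, since $\alpha_M$ preserves the bracket, $[M,M]$ is $\alpha_M$-invariant, so $([M,M], \alpha_{M\mid})$ is a genuine Hom-subalgebra and it makes sense to speak of its nilpotency, solvability and Engel class.

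Next I would invoke the identity recorded in the Notation introduced just before Proposition \ref{product}: when $M = N$ and the Hom-actions are given by the structural bracket, one has $({}^M M, \alpha_{M\mid}) = ([M,M], \alpha_{M\mid})$. Consequently, in the present situation the Hom-ideal $({}^N M, \alpha_{M\mid})$ appearing in Theorem \ref{bounds} is exactly the derived Hom-subalgebra $([M,M], \alpha_{M\mid})$, and likewise $({}^M N, \alpha_{N\mid}) = ([M,M], \alpha_{M\mid})$, so the conclusion of the theorem about $({}^M N, \alpha_{N\mid})$ carries no extra content here.

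With these identifications in hand, the three parts follow at once. Part a) of Theorem \ref{bounds} gives $k \le {\sf ncl}(M \star M) \le k+1$ when $([M,M], \alpha_{M\mid})$ is nilpotent of class $k$, i.e. $(M \star M, \alpha_{M \star M})$ is nilpotent of class $k$ or $k+1$. Part b) gives $k \le {\sf scl}(M \star M) \le k+1$ when $([M,M], \alpha_{M\mid})$ is solvable of class $k$; here the extra hypothesis of Theorem \ref{bounds}\,b) that $\alpha_M$ (equivalently $\alpha_N = \alpha_M$) be surjective or satisfy the weak $\alpha$-identity condition is precisely the extra hypothesis imposed in part b) of the Corollary. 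Part c) of Theorem \ref{bounds} gives that $(M \star M, \alpha_{M \star M})$ is $(k+1)$-Engel whenever $([M,M], \alpha_{M\mid})$ is $k$-Engel. Since everything reduces to a substitution of parameters, there is no genuine obstacle; the only points requiring a moment's care are the compatibility of the adjoint Hom-action of $M$ with itself and the identification ${}^M M = [M,M]$, both of which are already recorded in the text.
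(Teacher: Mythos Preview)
Your proposal is correct and follows exactly the approach of the paper, which simply states that the Corollary is a particular case of Theorem \ref{bounds}. You have in fact spelled out more detail than the paper does (the compatibility of the adjoint Hom-action via Example \ref{example_action} and the identification ${}^{M}M = [M,M]$ from the Notation preceding Proposition \ref{product}), so there is nothing to add.
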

\begin{proof}
This is a particular case of Theorem \ref{bounds}.
\end{proof}
\bigskip

The following examples show that both outcomes obtained in the above results for the nilpotency and solvability class  of the tensor product can be occurred.

\begin{Ex}\
\begin{enumerate}
\item[a)]Let $(M,\alpha_M)$ be a nilpotent Hom-Lie algebra of class $1$, with the  Hom-action on itself given by the bracket in $M$. Then for $m_i \in M$, $1\leq i \leq 4$, we have
 \begin{equation*}
 [m_1 \star m_2, m_3 \star m_4]=-(^{m_2}m_1) \star (^{m_3}m_4)= - [m_2,m_1] \star [m_3, m_4]=0
  \end{equation*}
Hence $(M \star M)^{[1]} = 0$  implying that ${\sf ncl}(M \star M,\alpha_{M \star M})= {\sf ncl}([M,M], \alpha_{M\mid})$.

\item[b)]Let $(M,\alpha_M)$ be a Hom-Lie algebra such that  $\alpha_M$ is a surjective endomorphism or $(M,\alpha_M)$ satisfies the weak $\alpha$-identity condition (see Remark \ref{conditions}). If $(M,\alpha_M)$ is solvable of class $1$, with the Hom-action on itself given by the bracket in $M$, then
     $(M \star M)^{(1)} = 0$,  implying that ${\sf scl}(M \star M,\alpha_{M \star M})={\sf scl}([M,M], \alpha_{M\mid})$.

\item[c)]  Let $(M, \alpha_M)$ be the three-dimensional Hom-Lie algebra with basis $\{a_1,a_2,a_3\}$, endomorphism $\alpha_M=0$ and bracket operation given by $[a_1,a_2] = - [a_2,a_1] = a_2$ and $[a_2,a_3] = - [a_3,a_2] = a_2$.

    Consider Hom-ideals $(G, 0)$ and $(H, 0)$ of $(M, 0)$ spanned by $\{a_1,a_2\}$ and $\{a_2,a_3\}$, respectively. Of course, $(G, 0)$ and  $(H, 0)$ act compatibly on each other by the Hom-action given by the structural  bracket. A direct checking shows that $^{H}{G} = \langle \{a_2 \} \rangle$ is nilpotent of class 1; $(G \star H)^{[1]} = \langle \{a_2 \star a_2 \} \rangle$ is nilpotent of class  2 and $^{G}{H} = \langle \{a_2 \} \rangle$ is nilpotent of class 1, illustrating the bounds provided by Theorem \ref{bounds}.

    Note that, since $(M, 0)$ satisfies the weak $\alpha$-identity condition, the same conclusions about solvability are true.

\item[d)] Let $(M, \alpha_M)$ be the four-dimensional Hom-Lie algebra with basis $\{a_1,a_2,a_3,$ $a_4\}$, endomorphism $\alpha_M=0$ and bracket operation given by $[a_1,a_2] = - [a_2,a_1] = a_3$, $[a_1,a_3] = - [a_3,a_1] = a_4$ and $[a_2,a_3] = - [a_3,a_2] = a_1$.

    Then $([M,M], 0) = (\langle \{ a_1, a_3, a_4 \} \rangle, 0)$ is nilpotent of class 2 and $(M\star M, 0)$ is nilpotent of class 3, since $(M\star M)^{[1]}$ is spanned by $\{ a_1 \star a_1, a_1 \star a_3, a_1 \star a_4, a_3 \star a_1, a_3 \star a_3, a_3 \star a_4, a_4 \star a_1, a_4 \star a_3, a_4 \star a_4 \}$, illustrating the bounds provided by Corollary \ref{bounds square}.

       Since  $(M, 0)$ satisfies the weak $\alpha$-identity condition, the same conclusions about solvability are true.

\item[e)] Let $(M, \alpha_M)$ be the two-dimensional Hom-Lie algebra with basis $\{a_1,a_2\}$, bracket operation given by $[a_1,a_2]=-[a_2,a_1]= a_1$ and endomorphism $\alpha$ represented by the matrix $\left( \begin{array}{cc} 0 & 1 \\ 0 & 1 \end{array} \right)$ \cite{CaInPa}. Then $([M,M], \alpha_{M\mid})$ is 1-Engel and $(M \star M, \alpha_{M \star M})$ is 2-Engel, illustrating the bounds provided by Corollary \ref{bounds square}.
\end{enumerate}
\end{Ex}

\begin{De}
Let $(Q, \alpha_Q)$ and $(M, \alpha_M)$ be Hom-Lie algebras and there is a Hom-action of $(Q, \alpha_Q)$ on $(M, \alpha_M)$. We say that $(Q, \alpha_Q)$ acts nilpotently on $(M, \alpha_M)$ if $(M, \alpha_M)$ has a series
\[
0 = M_0 \unlhd M_1 \unlhd \dots \unlhd M_{k-1} \unlhd M_k = M
\]
such that ${^x}m_{i+1} \in M_i$ for all $x \in Q$, $m_{i+1} \in M_{i+1}$, $0\leq i < k$.
\end{De}

\begin{Ex}
Let $(M, \alpha_M)$ and $(N, \alpha_N)$ be Hom-ideals of a Hom-Lie algebra $(Q, \alpha_Q)$ such that $N \subset M$.  If $({M}/{N},\overline{\alpha}_{M})$ has a central series, then  $(M, \alpha_M)$  acts nilpotently on $({M}/{N},\overline{\alpha}_{M})$ by means of the Hom-action of $(M, \alpha_M)$ on $({M}/{N},\overline{\alpha}_{M})$ given by   $^{m'}(m + N) = [m',m] + N$, for all $m', m \in M$.
\end{Ex}

\begin{Th} \label{epimorphism}
For any Hom-Lie algebra $(Q, \alpha_{Q})$, the map
\[
\varphi : \left( \frac{Q^{[i]}}{Q^{[i+1]}}, \overline{\alpha}_{Q \mid} \right) \star \left( \frac{Q}{[Q,Q]}, \overline{\alpha}_{Q \mid} \right)  \longrightarrow \left( \frac{Q^{[i+1]}}{Q^{[i+2]}}, \overline{\alpha}_{Q \mid} \right), \quad i\geq 0,
\]
 given by
\begin{equation} \label{varphi}
\varphi\left( \left(x+Q^{[i+1]} \right) \star \left( y + [Q,Q] \right) \right)= [x,y]+ Q^{[i+2]}
 \end{equation}
 is a surjective  Hom-Lie algebra homomorphism.
\end{Th}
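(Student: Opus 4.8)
The plan is to construct $\varphi$ via the universality property of the non-abelian tensor product (Lemma \ref{Lie pairing}), i.e. to produce a suitable Hom-Lie pairing and then check surjectivity directly. So I would first set $\overline{Q} = Q/[Q,Q]$ with induced endomorphism $\overline{\alpha}_Q$, and recall that $\left(Q^{[i]}/Q^{[i+1]}, \overline{\alpha}_{Q\mid}\right)$ and $\overline{Q}$ are abelian Hom-Lie algebras, since $Q^{[i+1]} = [Q^{[i]},Q] \supseteq [Q^{[i]}, Q^{[i]}]$ and $[Q,Q]$ is the derived ideal. Because the target $\left(Q^{[i+1]}/Q^{[i+2]}, \overline{\alpha}_{Q\mid}\right)$ is abelian too, several of the Hom-Lie-pairing axioms will degenerate. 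The Hom-actions needed are the ones coming from the bracket of $Q$: e.g. $\overline{Q}$ acting on $Q^{[i]}/Q^{[i+1]}$ by ${}^{y+[Q,Q]}(x+Q^{[i+1]}) = [y,x] + Q^{[i+1]}$, which is well defined precisely because $[[Q,Q], Q^{[i]}] \subseteq [Q, Q^{[i]}] = Q^{[i+1]}$ and $[Q, Q^{[i+1]}] \subseteq Q^{[i+1]}$; and I should verify these mutual actions are compatible in the sense of Definition \ref{action}(2), which again follows from the abelianness of the quotients.

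The main computation is to verify that the bilinear map
\[
h : \left( \frac{Q^{[i]}}{Q^{[i+1]}} \times \frac{Q}{[Q,Q]},\ \alpha_{\times}\right) \longrightarrow \left( \frac{Q^{[i+1]}}{Q^{[i+2]}},\ \overline{\alpha}_{Q\mid}\right), \qquad h\left(x+Q^{[i+1]},\ y+[Q,Q]\right) = [x,y] + Q^{[i+2]}
\]
is well defined and is a Hom-Lie pairing. Well-definedness: if $x \in Q^{[i+1]}$ then $[x,y] \in Q^{[i+2]}$; if $y \in [Q,Q]$ then $[x,y] \in [Q^{[i]},[Q,Q]] \subseteq [Q^{[i]}, Q] = Q^{[i+1]}$ — but I need it in $Q^{[i+2]}$, so here I use that, modulo $Q^{[i+2]}$, the bracket $[x,-]$ with $x \in Q^{[i]}$ vanishes on $[Q,Q]$; this is exactly the statement $[Q^{[i]}, [Q,Q]] \subseteq Q^{[i+2]}$, which one proves from the Hom-Jacobi identity: $[\alpha(x),[y,z]] = -[\alpha(z),[x,y]] - [\alpha(y),[z,x]]$ puts $[\alpha(x),[y,z]]$ into $[Q, Q^{[i+1]}] + [Q, Q^{[i+1]}] = Q^{[i+2]}$ for $x \in Q^{[i]}$, and since $\alpha$ may not be surjective one argues instead directly that the relevant subspace is spanned by such bracket expressions. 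Then pairing axioms (i) and (ii) reduce, using skew-symmetry and the definitions of the Hom-actions, to instances of the Hom-Jacobi identity in $Q$ read modulo $Q^{[i+2]}$; axioms (iii) and (iv) are immediate since the target is abelian (so the right-hand side of (iii) is $0$, matching ${}^n m \star {}^{m'}n'$ landing in a double bracket inside $Q^{[i+2]}$) and $\alpha$ is multiplicative.

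Once $h$ is a Hom-Lie pairing, Lemma \ref{Lie pairing} yields a unique homomorphism $\varphi = h^{\ast}$ with the formula \eqref{varphi}. Surjectivity is then clear: $Q^{[i+1]} = [Q^{[i]}, Q]$ is spanned by brackets $[x,y]$ with $x \in Q^{[i]}$, $y \in Q$, so $Q^{[i+1]}/Q^{[i+2]}$ is spanned by the elements $[x,y] + Q^{[i+2]} = \varphi\left((x+Q^{[i+1]}) \star (y+[Q,Q])\right)$, which lie in the image of $\varphi$. I expect the well-definedness in the second variable — i.e. pinning down that $[Q^{[i]},[Q,Q]] \subseteq Q^{[i+2]}$ in the Hom setting without assuming $\alpha_Q$ surjective — to be the only genuinely delicate point; everything else is a routine unwinding of the pairing axioms against the Hom-Jacobi identity.
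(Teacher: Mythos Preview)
Your approach via a Hom-Lie pairing and Lemma~\ref{Lie pairing} is exactly the paper's. The one simplification you overlook is that the induced mutual Hom-actions are actually \emph{trivial} (since $[Q,Q^{[i]}]\subseteq Q^{[i+1]}$ and $[Q^{[i]},Q]\subseteq [Q,Q]$), which the paper states explicitly and which makes pairing axioms (i)--(iii) hold vacuously; only well-definedness of $h$ and axiom~(iv) actually require verification, and the paper, like you, does not elaborate on the inclusion $[Q^{[i]},[Q,Q]]\subseteq Q^{[i+2]}$.
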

\begin{proof} First of all we observe that Hom-actions of $\left( \frac{Q^{[i]}}{Q^{[i+1]}}, \overline{\alpha}_{{Q}\mid} \right)$ and $\left( \frac{Q}{[Q,Q]},  \overline{\alpha}_{Q \mid} \right)$  on each other are induced by the bracket in $Q$ and clearly both are trivial and therefore compatible.
 On the other hand, the bilinear map
 \[
 h : \left( \frac{Q^{[i]}}{Q^{[i+1]}}, \overline{\alpha}_{Q \mid} \right) \times \left( \frac{Q}{[Q,Q]},  \overline{\alpha}_{Q \mid} \right) \to \left( \frac{Q^{[i+1]}}{Q^{[i+2]}}, \overline{\alpha}_{Q \mid} \right)
 \]
 given by $h \left( \left( x+ Q^{[i+1]} \right), \left( y + [Q, Q] \right) \right) = [x,y]+ Q^{[i+2]}$ is a well-defined Hom-Lie pairing, then by Lemma \ref{Lie pairing} $h$ induces the homomorphism $\varphi$, which clearly is a surjection.
\end{proof}

\begin{Co}
If $(Q, \alpha_Q)$ is a nilpotent Hom-Lie algebra such that $\left( \frac{Q}{[Q,Q]}, \overline{\alpha}_{Q \mid} \right)$ is finite-dimensional, then  $(Q, \alpha_Q)$ is finite-dimensional.
\end{Co}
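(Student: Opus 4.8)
The plan is to combine the lower central series of $(Q,\alpha_Q)$ with the surjection of Theorem~\ref{epimorphism} and the elementary observation that the non-abelian tensor product of two finite-dimensional Hom-Lie algebras is again finite-dimensional.

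First I would note that, since $(Q,\alpha_Q)$ is nilpotent, Theorem~\ref{nilpotent} yields a finite integer $k$ with $Q^{[k]}=0$, so that
\[
0=Q^{[k]}\subseteq Q^{[k-1]}\subseteq\dots\subseteq Q^{[1]}\subseteq Q^{[0]}=Q
\]
is a finite filtration of $Q$ by Hom-ideals, and $\dim Q=\sum_{i=0}^{k-1}\dim\big(Q^{[i]}/Q^{[i+1]}\big)$. Hence it is enough to show that each factor $Q^{[i]}/Q^{[i+1]}$ is finite-dimensional, and this I would do by induction on $i$.

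For $i=0$ the factor is $Q^{[0]}/Q^{[1]}=Q/[Q,Q]$, which is finite-dimensional by hypothesis. Assuming $Q^{[i]}/Q^{[i+1]}$ is finite-dimensional, Theorem~\ref{epimorphism} provides a surjective homomorphism
\[
\varphi:\left(\frac{Q^{[i]}}{Q^{[i+1]}},\overline{\alpha}_{Q\mid}\right)\star\left(\frac{Q}{[Q,Q]},\overline{\alpha}_{Q\mid}\right)\longrightarrow\left(\frac{Q^{[i+1]}}{Q^{[i+2]}},\overline{\alpha}_{Q\mid}\right),
\]
so it suffices to check that the domain of $\varphi$ is finite-dimensional. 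But by Definition~\ref{Def} the underlying vector space of $M\star N$ is a quotient of the vector space $M\otimes N$; as both $Q^{[i]}/Q^{[i+1]}$ (inductive hypothesis) and $Q/[Q,Q]$ (hypothesis) are finite-dimensional, $M\otimes N$, and hence the star-product, is finite-dimensional. Therefore $Q^{[i+1]}/Q^{[i+2]}$, a quotient of this space, is finite-dimensional, which closes the induction; summing the dimensions of the finitely many factors then shows that $(Q,\alpha_Q)$ is finite-dimensional.

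I do not expect a genuine obstacle here. The only points needing a moment's care are that the non-abelian tensor product in Theorem~\ref{epimorphism} is taken with respect to the trivial (hence compatible) Hom-actions used there, so the product is legitimately defined, and that the final dimension count is valid precisely because nilpotency guarantees that the lower central series terminates after finitely many steps.
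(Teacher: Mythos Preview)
Your proof is correct and is exactly the argument the paper has in mind: the paper simply states that the proof is ``a simple modification of the proof of \cite[Corollary 1]{EG}'', and that Lie-algebra argument proceeds precisely by induction on the factors $Q^{[i]}/Q^{[i+1]}$ of the lower central series, using the surjection of Theorem~\ref{epimorphism} together with the fact that $M\star N$ is a vector-space quotient of $M\otimes N$. Your remarks on the trivial Hom-actions and on the termination of the lower central series are the only points requiring care, and you have addressed them.
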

\begin{proof}
This is a simple modification of the proof of \cite[Corollary 1]{EG} .
\end{proof}

\begin{Le} \label{action tensor}
Let $(G, \alpha_{Q \mid}), (H, \alpha_{Q \mid}), (M, \alpha_{Q \mid}), (N, \alpha_{Q \mid})$ be Hom-ideals of a Hom-Lie algebra $(Q, \alpha_Q)$ such that $H \subseteq G$, $N \subseteq M$ and there are compatible Hom-actions of $({G}/{H}, \overline{\alpha}_{Q \mid})$ and $({M}/{N}, \overline{\alpha}_{Q \mid})$ on each other.

If $(Q, \alpha_Q)$ acts nilpotently on $({G}/{H}, \overline{\alpha}_{Q \mid})$ and $({M}/{N}, \overline{\alpha}_{Q \mid})$, then $(Q, \alpha_Q)$ acts nilpotently on $({G}/{H}, \overline{\alpha}_{Q \mid})\star ({M}/{N}, \overline{\alpha}_{Q \mid})$ as well.
\end{Le}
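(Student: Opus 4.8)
Lemma \ref{action tensor} — proof plan.

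The plan is to produce an explicit series in $({G}/{H}, \overline{\alpha}_{Q\mid})\star({M}/{N}, \overline{\alpha}_{Q\mid})$ exhibiting the nilpotency of the $(Q,\alpha_Q)$-action, built from the two given series. Write $\bar G = G/H$, $\bar M = M/N$ for brevity, and let
\[
0 = \bar G_0 \unlhd \bar G_1 \unlhd \dots \unlhd \bar G_r = \bar G,
\qquad
0 = \bar M_0 \unlhd \bar M_1 \unlhd \dots \unlhd \bar M_s = \bar M
\]
be the two series with $^x\bar G_{p+1}\subseteq \bar G_p$ and $^x\bar M_{q+1}\subseteq \bar M_q$ for all $x\in Q$. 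For each $t$ with $0\le t\le r+s$ let $T_t$ be the Hom-ideal of $\bar G\star\bar M$ generated (as a Hom-ideal) by all $g\star m$ with $g\in \bar G_p$, $m\in\bar M_q$ and $p+q\le t$; so $T_0 = 0$ and $T_{r+s} = \bar G\star\bar M$ since $\bar G\star\bar M$ is generated by the elementary tensors. First I would check that each $T_t$ is indeed a $(Q,\alpha_Q)$-invariant Hom-ideal: invariance under the $Q$-action follows because $^x(g\star m)$ can be expanded — using the action formula of Proposition \ref{action-on-tensor}(b) applied to the $Q$-action rather than the internal ones — into a sum of terms lying in higher (hence $\le t$) filtration degree, so the $Q$-action does not increase $t$; being a Hom-ideal is by construction.

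The core step is the inclusion $^x T_{t+1}\subseteq T_t$ for every $x\in Q$. It suffices to verify this on the generators $g\star m$ of $T_{t+1}$, i.e. with $g\in\bar G_{p+1}$, $m\in\bar M_{q+1}$ and $(p+1)+(q+1)\le t+1$, which forces $p+q\le t-1$. Using the formula
\[
{}^{x}(g\star m) = [{}^{x}g, \alpha(g)]\,\star\,\overline\alpha(m) + \dots
\]
— more precisely the Hom-action of $Q$ on the tensor product, whose generating terms are of the shape $({}^x g)\star\overline\alpha(m)$ and $\overline\alpha(g)\star({}^x m)$ up to bracket corrections — we get ${}^x g\in\bar G_p$ and ${}^x m\in\bar M_q$, while $\overline\alpha$ preserves each $\bar G_p$, $\bar M_q$. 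Hence every summand of ${}^x(g\star m)$ is an elementary tensor from $\bar G_{p}\star\bar M_{q+1}$ or $\bar G_{p+1}\star\bar M_{q}$, and since $p+(q+1)\le t$ and $(p+1)+q\le t$, all of them lie in $T_t$. Therefore ${}^x(g\star m)\in T_t$, and since the $Q$-action is a derivation-type operation that is determined on Hom-ideal generators, ${}^xT_{t+1}\subseteq T_t$ follows.

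Assembling these, $0 = T_0\unlhd T_1\unlhd\dots\unlhd T_{r+s} = \bar G\star\bar M$ is a $(Q,\alpha_Q)$-invariant series of Hom-ideals with ${}^xT_{t+1}\subseteq T_t$ for all $x\in Q$, which is precisely the statement that $(Q,\alpha_Q)$ acts nilpotently on $\bar G\star\bar M$. The main obstacle I anticipate is the bookkeeping in the core step: one must be careful that the $Q$-action on $\bar G\star\bar M$ used here is the ``outer'' action coming from $Q$ acting on both $\bar G$ and $\bar M$ (by restriction of the bracket in $Q$ to the Hom-ideals, passed to the quotients), not the action of $\bar G$ on $\bar M$ internal to Proposition \ref{action-on-tensor}; one then has to confirm that this outer action is well defined on the tensor product, i.e. that it annihilates the defining relations \textit{i)}--\textit{v)} of Definition \ref{Def}, which is a routine but slightly lengthy verification using that $Q$ acts on $\bar G$ and on $\bar M$ compatibly with the mutual actions. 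Once that is in place, the filtration argument above closes the proof.
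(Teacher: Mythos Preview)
Your plan is essentially the paper's own proof: the paper also takes the two given series, writes down the same $Q$-action formula ${}^{q}\big((g+H)\star(m+N)\big)=([q,g]+H)\star(\alpha(m)+N)+(\alpha(g)+H)\star([q,m]+N)$, and filters the tensor product by the same total-degree sets $T_r=\big\langle\{(g_i+H)\star(m_j+N)\mid i+j\le r\}\big\rangle$, declaring it routine that these form a series of Hom-ideals witnessing nilpotency.

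One bookkeeping point worth adjusting: the paper takes $T_r$ to be the \emph{linear span} of those elementary tensors, not the Hom-ideal they generate. Since $\bar G\star\bar M$ is already linearly spanned by all $g\star m$, the core computation ${}^x(g\star m)=({}^xg)\star\overline\alpha(m)+\overline\alpha(g)\star({}^xm)$ gives ${}^xT_{t+1}\subseteq T_t$ immediately on the whole subspace, with no appeal to a ``derivation on Hom-ideal generators'' argument. That last step in your plan is in fact not automatic: from ${}^xs\in T_t$ for generators $s$ and ${}^x[s,a]=[{}^xs,\alpha(a)]+[\alpha(s),{}^xa]$ one does not directly get the second summand into $T_t$. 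Using spans as the paper does sidesteps this; the price is that one must separately check the spans are Hom-ideals, which the paper leaves as ``routine'' (and which is clear in the intended application where the mutual actions of $\bar G$ and $\bar M$ are trivial, so the tensor product is abelian).
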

\begin{proof}
Since $(Q, \alpha_Q)$ acts nilpotently on $({G}/{H}, \overline{\alpha}_{Q \mid})$ and $({M}/{N}, \overline{\alpha}_{Q \mid})$, then there are series
$$0 = \frac{G_0}{H}\unlhd \frac{G_1}{H} \unlhd \dots \unlhd \frac{G_t}{H} = \frac{G}{H}$$
and
$$0 = \frac{M_0}{N}\unlhd \frac{M_1}{N} \unlhd \dots \unlhd \frac{M_s}{N} = \frac{M}{N}$$
such that ${^q}(g_{i+1}+H) \in  {G_i}/{H}$ and ${^q}(m_{j+1}+N) \in  {M_j}/{N}$,  for all $q\in Q$, $g_{i+1} \in {G_{i+1}}$, $m_{j+1} \in{M_{j+1}}$,  $0 \leq i < t$, $0 \leq j < s$.

To make formulas shorter, we denote $(T, \alpha_T)=({G}/{H}, \overline{\alpha}_{Q \mid})\star ({M}/{N}, \overline{\alpha}_{Q \mid})$.
Straightforward calculations show  that there is a Hom-action of $(Q, \alpha_Q)$ on $(T, \alpha_T)$ given by
\begin{equation} \label{action}
{^q}\big((g+ H) \star (m + N)\big) = \big([q,g]+H\big) \star \big({\alpha}_{M}(m)+ N\big) + \big({\alpha}_{G}(g)+H\big) \star \big([q,m]+N\big)
\end{equation}

 It is routine to check that the following series of Hom-ideals of $(T, {\alpha}_{T})$:
\[
0 = (T_0, {\alpha}_{T \mid})  \unlhd (T_1,  {\alpha}_{T \mid}) \unlhd \dots \unlhd (T_{t+s},  {\alpha}_{T \mid}) = (T,  {\alpha_{T}}),
\]
where $T_r\!= \!\big\langle \{ ({g}_i\!+\!H) \star ({m}_j\!+\!N) \mid g_i\in G_i,  m_j\in M_j, i+j \leq r,   0 \leq i \leq t,  0 \leq j \leq s \} \big\rangle$, $0\leq r\leq t+s$,
is the one confirming that $(Q, \alpha_Q)$ acts nilpotently on $(T, \alpha_T)$  by means of the  Hom-action defined in (\ref{action}).
\end{proof}

\begin{Th} \label{3.11}
Let $(M, \alpha_{M})$ be a Hom-ideal of a Hom-Lie algebra $(Q, \alpha_Q)$ with surjective endomorphism $\alpha_Q$. If $(M, \alpha_{M})$ and $\left( \frac{Q}{[M,M]}, \overline{\alpha}_{Q} \right)$ are nilpotent Hom-Lie algebras, then $(Q, \alpha_{Q})$ is nilpotent as well.
\end{Th}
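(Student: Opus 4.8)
The plan is to follow the Lie algebra strategy of \cite{STME}, controlling the lower central series of $(M,\alpha_M)$ inside $(Q,\alpha_Q)$ by means of Theorem \ref{epimorphism} and Lemma \ref{action tensor}, and then splicing together two central series. Write $N=[M,M]$. Since $\alpha_Q$ is surjective and $(M,\alpha_M)$ is a Hom-ideal of $(Q,\alpha_Q)$, each term $(M^{[i]},\alpha_{Q\mid})$ of the lower central series of $(M,\alpha_M)$ is a Hom-ideal of $(Q,\alpha_Q)$: surjectivity of $\alpha_Q$ is exactly what turns the Hom-Jacobi identity $[\alpha_Q(q),[x,y]]=[[q,x],\alpha_Q(y)]+[\alpha_Q(x),[q,y]]$ into the closure statement $[Q,M^{[i]}]\subseteq M^{[i]}$. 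Since $(M,\alpha_M)$ is nilpotent, fix $c$ with $M^{[c]}=0$.

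First I would show that $(Q,\alpha_Q)$ acts nilpotently on $(M/N,\overline{\alpha}_M)$ via ${}^q(m+N)=[q,m]+N$. This is immediate from nilpotency of $(Q/N,\overline{\alpha}_Q)$: intersecting a central series of $Q/N$ with the Hom-ideal $M/N$ produces a series of Hom-ideals $0=\overline{M}_0\unlhd\cdots\unlhd\overline{M}_k=M/N$ with ${}^q\overline{M}_{j+1}\subseteq\overline{M}_j$. Next I would prove, by induction on $i\geq 0$, that $(Q,\alpha_Q)$ acts nilpotently on $(M^{[i]}/M^{[i+1]},\overline{\alpha}_M)$, the case $i=0$ being the previous step. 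For the inductive step, observe that the mutual Hom-actions of $(M^{[i]}/M^{[i+1]},\overline{\alpha}_M)$ and $(M/N,\overline{\alpha}_M)$ induced by the bracket of $M$ are trivial (as $[M^{[i]},M]\subseteq M^{[i+1]}$ and $[M,M]\subseteq N$), hence compatible; so Lemma \ref{action tensor}, applied to the Hom-ideals $M^{[i+1]}\subseteq M^{[i]}$ and $N\subseteq M$ of $(Q,\alpha_Q)$, shows that $(Q,\alpha_Q)$ acts nilpotently on $(M^{[i]}/M^{[i+1]},\overline{\alpha}_M)\star(M/N,\overline{\alpha}_M)$. Theorem \ref{epimorphism} applied to $(M,\alpha_M)$ yields a surjective homomorphism $\varphi$ of this tensor product onto $(M^{[i+1]}/M^{[i+2]},\overline{\alpha}_M)$, and since the image of a nilpotent-action series under a $Q$-equivariant surjection is again a nilpotent-action series, the induction closes.

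I expect the verification that $\varphi$ is $Q$-equivariant to be the technical heart of the argument. Here $Q$ acts on the source of $\varphi$ through the action of Lemma \ref{action tensor} (formula (\ref{action})) and on the target by the adjoint action. Unwinding definitions,
\[
\varphi\left({}^q\left((x+M^{[i+1]})\star(y+N)\right)\right)=\left[[q,x],\alpha_M(y)\right]+\left[\alpha_M(x),[q,y]\right]+M^{[i+2]},
\]
which by the Hom-Jacobi identity equals $[\alpha_Q(q),[x,y]]+M^{[i+2]}$. Thus $\varphi$ intertwines the action of Lemma \ref{action tensor} with the action $z+M^{[i+2]}\mapsto[\alpha_Q(q),z]+M^{[i+2]}$ on $M^{[i+1]}/M^{[i+2]}$; because $\alpha_Q$ is surjective, the families of operators $\{[\alpha_Q(q),-]:q\in Q\}$ and $\{[q,-]:q\in Q\}$ coincide, so acting nilpotently with respect to one of them is equivalent to acting nilpotently with respect to the other. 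Apart from the remark on the $M^{[i]}$ being Hom-ideals, this is the only place where surjectivity of $\alpha_Q$ is genuinely needed.

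Finally I would assemble the central series. Lifting to $Q$ the series witnessing the nilpotent action of $(Q,\alpha_Q)$ on each $(M^{[i]}/M^{[i+1]},\overline{\alpha}_M)$ for $0\leq i\leq c-1$ and concatenating, one obtains a chain of Hom-ideals of $(Q,\alpha_Q)$ from $M^{[c]}=0$ to $M^{[0]}=M$ along which $[Q,(\text{term})]$ lies in the preceding term; in particular this contains such a chain from $0$ to $N=M^{[1]}$. On the other hand, lifting to $Q$ a central series of the nilpotent Hom-Lie algebra $(Q/N,\overline{\alpha}_Q)$ gives a chain of Hom-ideals of $(Q,\alpha_Q)$ from $N$ to $Q$ with the same property. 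Splicing the two chains at $N$ produces a central series of $(Q,\alpha_Q)$, whence $(Q,\alpha_Q)$ is nilpotent.
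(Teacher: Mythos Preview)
Your proposal is correct and follows essentially the same strategy as the paper: show by induction, via Lemma \ref{action tensor} and Theorem \ref{epimorphism}, that $(Q,\alpha_Q)$ acts nilpotently on each factor $M^{[i]}/M^{[i+1]}$, starting from the nilpotency of $Q/[M,M]$, and then splice the resulting series with a lift of a central series of $Q/[M,M]$. Your write-up is in fact more careful than the paper's in two places: you verify the $Q$-equivariance of $\varphi$ (up to $\alpha_Q$, which surjectivity handles), and you correctly refine the chain $0=M^{[c]}\unlhd\cdots\unlhd M^{[1]}$ by the nilpotent-action series inside each quotient before splicing, whereas the paper's displayed final series takes the bare $M^{[i]}$ as consecutive terms.
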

\begin{proof}  Since $\left( \frac{Q}{[M,M]}, \overline{\alpha}_{Q} \right)$ is nilpotent, then by Theorem \ref{nilpotent} there exists a central series as follows:
\begin{equation} \label{central series}
 0 = \frac{Q_0}{[M,M]} \unlhd \frac{Q_1}{[M,M]} \unlhd \dots \unlhd \frac{Q_t}{[M,M]} =  \frac{Q}{[M,M]}.
\end{equation}
By taking $M_j = Q_j \cap M$, $0 \leq j \leq t$,  we construct the series
\begin{equation*}
0 =  \frac{M_0}{[M,M]} \unlhd \frac{M_1}{[M,M]} \unlhd \dots \unlhd \frac{M_t}{[M,M]} =  \frac{M}{[M,M]}.
\end{equation*}
Then it is easy to see that $(Q, \alpha_Q)$ acts nilpotently on $\left( \frac{M}{[M,M]}, \overline{\alpha}_{M} \right)$ by means of the Hom-action
\begin{equation} \label{action nilpot}
{^q} (m + [M, M]) = [q,m] + [M, M], ~~ q \in Q, m \in M.
\end{equation}
For $i \geq 0$, let $F_i = \frac{M^{[i]}}{M^{[i+1]}}$. We claim that $(Q, \alpha_Q)$ acts nilpotently on $(F_i, \overline{\alpha}_{M \mid})$. This can be shown by induction on $i$.  Namely, for $i=0$, $(Q, \alpha_Q)$ acts nilpotently on $(F_0, \overline{\alpha}_{M \mid})$ by means of the Hom-action given in
 (\ref{action nilpot}). Now, if we assume that  $(Q, \alpha_Q)$ acts nilpotently on $(F_i, \overline{\alpha}_{M \mid})$, then by Lemma \ref{action tensor} we have that  $(Q, \alpha_Q)$ acts nilpotently on $(F_i, \overline{\alpha}_{M \mid}) \star \left( \frac{M}{[M, M]}, \overline{\alpha}_{M} \right)$. By Theorem \ref{epimorphism}, $(F_{i+1}, \overline{\alpha}_{M \mid})$ is an image of $(F_i, \overline{\alpha}_{M \mid}) \star \left( \frac{M}{[M,M]}, \overline{\alpha}_{M} \right)$ and hence $(Q, \alpha_Q)$ acts nilpotently on $(F_{i+1}, \overline{\alpha}_{M \mid})$ as well.

Since $(M, \alpha_{M})$ is nilpotent, then there exists $k \in \mathbb{Z}^+$ such that $M^{[k]} \neq 0$ and $M^{[k+1]} = 0$. Then by combining (\ref{central series}) with the lower central series of $(M, \alpha_{M})$, we get the following central series
\[
0 = M^{[k+1]} \unlhd M^{[k]} \unlhd \dots \unlhd M^{[1]} = [M, M] = Q_0 \unlhd Q_1 \unlhd \dots \unlhd Q_t = Q,
\]
 which means that $(Q, \alpha_{Q})$ is nilpotent.
 \end{proof}

\begin{Rem}
 In the case $\alpha_Q = \Id_Q$, Theorem \ref{3.11}
recovers the corresponding result for Lie algebras given in \cite[Theorem 2]{EG}.
\end{Rem}

The following example shows that the requirement of surjectivity for  the endomorphism $\alpha_Q$ in Theorem \ref{3.11} is really needed in order to establish the result.
\begin{Ex}
 Let $(Q, \alpha_Q)$ be the three-dimensional Hom-Lie algebra with basis $\{a_1,a_2,a_3\}$, endomorphism $\alpha_Q =0$ (which is not surjective) and bracket operation given by $[a_1, a_2] = - [a_2, a_1]= a_2$; $[a_2, a_3] = - [a_3, a_2]= a_3$. Then $(M = \langle \{ a_2, a_3 \} \rangle, 0)$ is a nilpotent Hom-ideal of class $2$; $(Q/[M, M], 0) \cong (\langle \{ a_1, a_2 \} \rangle, 0)$ is nilpotent of class $2$. However $(Q, \alpha_Q)$ is not nilpotent.
\end{Ex}


\section{The case of perfect Hom-Lie algebras}

In this section we establish conditions under which the non-abelian tensor product of Hom-Lie algebras  preserves perfect objects. Moreover, we describe compatibility of the non-abelian tensor product with the universal central extensions of two perfect Hom-Lie algebras.

First of all let us recall that a Hom-Lie algebra $(M,\alpha_M)$ is \emph{perfect} if $M=[M,M]$.

Note that the non-abelian tensor product of prefect Hom-Lie algebras is not perfect, in general. For instance, let $(M,\alpha_M)$ and $(N, \alpha_N)$ be perfect Hom-Lie algebras with trivial (so compatible) Hom-actions on each other. Nevertheless $(M \star N, \alpha_{M \star N})$ is not perfect since $[M \star N, M \star N]=0$. But the non-abelian tensor product may be perfect if additional requirements are demanded.

\begin{Pro} \label{perfect}
Let $(M, \alpha_M)$ and $(N, \alpha_N)$ be perfect Hom-Lie algebras, with compatible Hom-action on each other, such that ${^N}M=M$ and  ${^M}N=N$. Then $(M \star N, \alpha_{M \star N})$ is a perfect Hom-Lie algebra as well.
\end{Pro}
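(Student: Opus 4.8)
The plan is to prove directly that every generator $m\star n$ of $(M\star N,\alpha_{M\star N})$ already lies in the derived Hom-subalgebra $\bigl[M\star N, M\star N\bigr]$. Since, by Definition \ref{Def}, $M\star N$ is a quotient of the vector space $M\otimes N$, the set $\{\,m\star n \mid m\in M,\ n\in N\,\}$ spans $M\star N$ as a vector space; hence, once each such element is shown to be a sum of brackets, every element of $M\star N$ is too, and we get $M\star N=[M\star N,M\star N]$, which is precisely perfectness.

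The one ingredient I would use is the formula for the bracket on generators from Definition \ref{Def}, namely
\[
[m\star n,\ m'\star n'] \;=\; -\,{}^{n}m\ \star\ {}^{m'}n' ,\qquad m,m'\in M,\ n,n'\in N .
\]
Read backwards, this says that every element of the form ${}^{n}m\star{}^{m'}n'$ is, up to sign, a bracket of two generators. So the second step is to feed in the hypotheses ${}^{N}M=M$ and ${}^{M}N=N$: given an arbitrary generator $m\star n$, I write $m=\sum_i {}^{n_i}a_i$ and $n=\sum_j {}^{b_j}c_j$ with $a_i,b_j\in M$ and $n_i,c_j\in N$ (absorbing scalars into the $a_i$ and $c_j$), and then use bilinearity of $(x,y)\mapsto x\star y$ together with the displayed formula to obtain
\[
m\star n \;=\; \sum_{i,j}{}^{n_i}a_i\star{}^{b_j}c_j \;=\; -\sum_{i,j}\bigl[a_i\star n_i,\ b_j\star c_j\bigr]\ \in\ [M\star N,M\star N].
\]
Extending linearly over the finitely many generators occurring in an arbitrary element of $M\star N$ finishes the argument.

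I do not expect a genuine obstacle here: the statement falls out immediately from the explicit description of the bracket on generators, and the role of the hypotheses is only to guarantee that both ``legs'' of a generator can be rewritten as linear combinations of acted-upon elements. The only point worth a sentence in the write-up is that it is really the conditions ${}^{N}M=M$ and ${}^{M}N=N$ — rather than perfectness of $(M,\alpha_M)$ and $(N,\alpha_N)$ on their own — that drive the proof; perfectness of $M$ and $N$ is the natural hypothesis to retain for the applications to universal central extensions in Theorem \ref{Th 5.5} and Corollary \ref{cor 6.7}, and, when the Hom-actions are the structural brackets and $M=N$, it coincides with ${}^{N}M=M$.
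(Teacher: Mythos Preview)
Your proposal is correct and is essentially the paper's own argument written out in detail: the paper's one-line proof records the inclusions $M\star N\subseteq {}^{N}M\star {}^{M}N\subseteq [M\star N,M\star N]$, and your computation with $m=\sum_i{}^{n_i}a_i$, $n=\sum_j{}^{b_j}c_j$ and the bracket formula $[m\star n,\,m'\star n']=-\,{}^{n}m\star{}^{m'}n'$ is exactly how one verifies those inclusions on generators. Your remark that only the conditions ${}^{N}M=M$ and ${}^{M}N=N$ are actually used in the proof is also accurate.
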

\begin{proof}
The obvious inclusions $M \star N \subseteq {^N}M \star {^M}N \subseteq [ M \star N, M \star N]$ prove the result.
\end{proof}

\begin{Co}\label{corollary6.2}
If $(M, \alpha_M)$  is a perfect Hom-Lie algebra, then $(M \star M, \alpha_{M \star M})$ is a perfect Hom-Lie algebra as well.
\end{Co}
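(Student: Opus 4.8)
The plan is to deduce Corollary \ref{corollary6.2} directly from Proposition \ref{perfect} by specializing $(N,\alpha_N)$ to $(M,\alpha_M)$ with the Hom-action of $(M,\alpha_M)$ on itself given by the structural bracket, as in Example \ref{example_action}. By part 2) of that example, these mutual Hom-actions are compatible, so $(M\star M, \alpha_{M\star M})$ is defined and Proposition \ref{perfect} applies provided its remaining hypotheses hold. Since $(M,\alpha_M)$ is assumed perfect, the first hypothesis, that both factors be perfect, is automatic.

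The only point requiring a remark is the condition ${}^N M = M$ and ${}^M N = N$, which here becomes ${}^M M = M$. By the Notation block preceding Proposition \ref{product}, when $M=N$ and the Hom-actions are the structural bracket, one has $\left({}^M M, \alpha_{M\mid}\right) = \left([M,M], \alpha_{M\mid}\right)$. Hence ${}^M M = [M,M]$, and perfectness of $(M,\alpha_M)$ gives exactly $[M,M] = M$, i.e.\ ${}^M M = M$. Thus all hypotheses of Proposition \ref{perfect} are met.

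Applying Proposition \ref{perfect} with $(N,\alpha_N) = (M,\alpha_M)$ then yields that $(M\star M, \alpha_{M\star M})$ is perfect, which is the claim. There is essentially no obstacle: the entire content is the translation of the two side conditions of Proposition \ref{perfect} into the single condition "$(M,\alpha_M)$ perfect" via the identification ${}^M M = [M,M]$, and everything else is a direct specialization.
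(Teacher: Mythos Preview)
Your proposal is correct and follows exactly the intended route: the paper states Corollary \ref{corollary6.2} without proof, as it is an immediate specialization of Proposition \ref{perfect} to $(N,\alpha_N)=(M,\alpha_M)$ with the bracket action, using ${}^M M=[M,M]=M$.
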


In the following example we give of Hom-Lie algebras satisfying the requirements of Proposition \ref{perfect}.

\begin{Ex} \label{example}
 Let $(M,\alpha_M)$ be the perfect three-dimensional Hom-Lie algebra with basis $\{e_1, e_2, e_3 \}$, $\alpha_M=0$, and bracket operation given by
\begin{align*}
[e_1,e_2]=-[e_2,e_1]= e_3, & &  [e_1,e_3]=-[e_3,e_1]= e_2, & & [e_2,e_3]= - [e_3, e_2]=e_1.
\end{align*}
Let $(N,\alpha_N)$ be the perfect four-dimensional Hom-Lie algebra with basis $\{a_1, a_2, a_3, a_4 \}$, $\alpha_N=0$, and bracket operation given by
 \begin{align*}
[a_1,a_2]=-[a_2,a_1]= a_3, & &  [a_1,a_3]=-[a_3,a_1]= a_2, & & [a_1,a_4]=- [a_4,a_1]=a_4,\\
 [a_2,a_3]= - [a_3, a_2]=a_1. && &&
\end{align*}
   The Hom-action of $(M,\alpha_M)$ on $(N,\alpha_N)$ given by
   \begin{align*}
^{e_1}a_2 = -^{e_2}a_1= a_3, &  ^{e_1}a_3 = -^{e_3}a_1= a_2,& ^{e_1}{a_4}= - ^{e_4}{a_1} =  a_4, &  ^{e_2}e_3 = - ^{e_3}a_2 = a_1.
\end{align*}
   and the Hom-action of $(N,\alpha_N)$ on $(M,\alpha_M)$ given by
   \begin{align*}
^{a_1}e_2 = -^{a_2}e_1= e_3, & &  ^{a_1}e_3 = -^{a_3}e_1= e_2, & & ^{a_2}e_3 = - ^{a_3}e_2 = e_1.
\end{align*}
are compatible Hom-actions. Moreover $^NM=M$ and $^MN=N$. Keep in mind that the unwritten brackets and actions are trivial.
\end{Ex}

A \emph{central extension} of a Hom-Lie algebra $(L, \alpha_L)$ is an epimorphism of Hom-Lie algebras $(K,\alpha_K) \stackrel{\pi}\twoheadrightarrow (M, \alpha_M)$ such that $[\Ker(\pi), K] = 0 $, i. e. $\Ker(\pi) \subseteq \Z(K)$.
It is called \emph{a universal central extension}  if in addition, for every central  extension $ (K',\alpha_{K'}) \stackrel{\pi'} \twoheadrightarrow (M, \alpha_M) $ there exists a unique homomorphism of Hom-Lie algebras $h : (K,\alpha_K) \to (K',\alpha_{K'})$ such that $\pi' \circ h = \pi$.

It is known by \cite[Theorem 4.11]{CaInPa} that a Hom-Lie algebra $(M, \alpha_M)$ admits a universal central extension if and only if $(M, \alpha_M)$ is perfect. The construction of the  universal central extension is as follows:  let  $\frak{uce}(M)$ be the quotient vector space $M \wedge M/\im(d_3)$, where $d_3$ denotes the boundary of the Hom-Lie homology complex (see \cite{CaInPa} for details). Let $\{x, y \}$ denote the coset of $x\wedge y\in M \wedge M$  in $\frak{uce}(M)$. The bracket operation  $[\{x_1,x_2\},\{y_1,y_2\}]= \{[x_1,x_2],[y_1,y_2]\}$ on $\frak{uce}(M)$ together with the naturally induced endomorphism $\alpha_{\frak{uce}(M)}:\frak{uce}(M)\rightarrow \frak{uce}(M)$, provides Hom-Lie algebra $(\frak{uce}(M), \alpha_{\frak{uce}(M)})$.
  Then
   \[
  (H_2^{\alpha}(M), \alpha_{\frak{uce}(M)\mid})\rightarrowtail (\frak{uce}(M), \alpha_{\frak{uce}(M)}) \overset{u}\twoheadrightarrow (M, \alpha_M), \quad u\{m_1, m_2\}=[m_1,m_2],
  \]
    is the universal central extension of the perfect Hom-Lie algebra $(M, \alpha_M)$, where $H_2^{\alpha}(M)$ denotes the second homology with trivial coefficients of $(M, \alpha_M)$.

There is another description of the universal central extension in \cite[Theorem 4.4]{CaKhPa} via the non-abelian tensor product. Namely, given a perfect Hom-Lie algebra  $(M,\alpha_M)$, then
\[
(M\star M, \alpha_{M \star M})\overset{\psi_M}\twoheadrightarrow (M, \alpha_M),  \quad \psi_M(m \star m')=[m,m'],
\]
 is the universal central extension of $(M,\alpha_M)$. Since the universal central extension is unique up to isomorphisms, we have
 \[
  (\frak{uce}(M), \alpha_{\frak{uce}(M)})\cong (M\star M, \alpha_{M \star M}), \quad  \{x,y \} \mapsto x \star y.
 \]

 In what follows we will use the notation  $(\frak{uce}(M), \alpha_{\frak{uce}(M)})$ even for $(M\star M, \alpha_{M \star M})$. Thus, by Corollary \ref{corollary6.2} if $(M, \alpha_M)$ is a perfect Hom-Lie algebra, then $(\frak{uce}(M), \alpha_{\frak{uce}(M)})$ is perfect as well.

\begin{Pro}
For a  perfect Hom-Lie algebra $(G, \alpha_G)$ with surjective endomorphism $\alpha_G$, there is an exact sequence of Hom-Lie algebras
\begin{equation*}
(H,\alpha_H)\overset{\eta} \longrightarrow (\frak{uce}(G) \star \frak{uce}(G), \alpha) \overset{\psi_G \ast \psi_G} \longrightarrow (G \star G, \alpha_{G \star G}) \longrightarrow 0 \ ,
\end{equation*}
where $H=\big(H_2^{\alpha}(G) \star \frak{uce}(G)\big) \rtimes \big(\frak{uce}(G) \star H_2^{\alpha}(G)\big)$ and $\alpha = \alpha_{\frak{uce}(G)} \star \alpha_{\frak{uce}(G)}$.
\end{Pro}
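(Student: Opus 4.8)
The plan is to obtain this sequence as a direct instance of Proposition \ref{exact-tensor-3}. Since $(G,\alpha_G)$ is perfect, the construction recalled above (together with the identification $(\frak{uce}(G),\alpha_{\frak{uce}(G)})\cong (G\star G,\alpha_{G\star G})$) provides the central, hence short exact, sequence
\[
0\longrightarrow (H_2^{\alpha}(G),\alpha_{\frak{uce}(G)\mid})\longrightarrow (\frak{uce}(G),\alpha_{\frak{uce}(G)})\overset{\psi_G}{\longrightarrow}(G,\alpha_G)\longrightarrow 0 .
\]
I would feed this sequence into Proposition \ref{exact-tensor-3} as \emph{both} of the two required short exact sequences, i.e. take $(M,\alpha_M)=(N,\alpha_N)=(H_2^{\alpha}(G),\alpha_{\frak{uce}(G)\mid})$, $(L,\alpha_L)=(K,\alpha_K)=(\frak{uce}(G),\alpha_{\frak{uce}(G)})$, $(P,\alpha_P)=(Q,\alpha_Q)=(G,\alpha_G)$, with $i_1=i_2$ the inclusion and $\sigma_1=\sigma_2=\psi_G$.

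Next I would check the hypotheses. Both $(\frak{uce}(G),\alpha_{\frak{uce}(G)})$ and $(G,\alpha_G)$ act on themselves via their own brackets, and by Example \ref{example_action}\,2) (with the whole algebra playing the role of the two Hom-ideals) these self-actions are compatible. Since $\psi_G$ is a homomorphism of Hom-Lie algebras, $\psi_G({}^{x}y)=\psi_G([x,y])=[\psi_G(x),\psi_G(y)]={}^{\psi_G(x)}\psi_G(y)$, so $\psi_G$ preserves these actions, and the same computation shows $\alpha_{\frak{uce}(G)}$ does too. The only genuinely non-formal point is the surjectivity of $\alpha_{\frak{uce}(G)}$: via the isomorphism with $(G\star G,\alpha_{G\star G})$ this endomorphism sends a generator $g\star g'$ to $\alpha_G(g)\star\alpha_G(g')$, so surjectivity of $\alpha_G$ makes it surjective on generators, hence surjective.

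With the hypotheses verified, Proposition \ref{exact-tensor-3} yields the exact sequence
\[
(M\star K,\alpha_{M\star K})\rtimes(L\star N,\alpha_{L\star N})\overset{\eta}{\longrightarrow}(L\star K,\alpha_{L\star K})\overset{\sigma_1\star\sigma_2}{\longrightarrow}(P\star Q,\alpha_{P\star Q})\longrightarrow 0 ,
\]
and substituting the above choices gives exactly the asserted sequence: $H=(H_2^{\alpha}(G)\star\frak{uce}(G))\rtimes(\frak{uce}(G)\star H_2^{\alpha}(G))$, $\alpha=\alpha_{\frak{uce}(G)}\star\alpha_{\frak{uce}(G)}$, the map $\psi_G\ast\psi_G$ being $\sigma_1\star\sigma_2=\psi_G\star\psi_G$ in the notation of Proposition \ref{composition}, while the semidirect-product structure on $H$ and the homomorphism $\eta$ are precisely the ones produced there. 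I do not expect a real obstacle here: the argument is essentially bookkeeping, and the only step deserving attention is confirming that perfectness of $(G,\alpha_G)$ plus surjectivity of $\alpha_G$ suffice to trigger Proposition \ref{exact-tensor-3} — in particular that all the action-compatibility and action-preservation requirements are automatic because every Hom-Lie algebra involved acts on itself by its bracket.
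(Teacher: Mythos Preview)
Your proposal is correct and follows essentially the same approach as the paper: apply Proposition~\ref{exact-tensor-3} to the universal central extension $0\to H_2^{\alpha}(G)\to\frak{uce}(G)\overset{\psi_G}{\to}G\to 0$ taken twice, after verifying that the self-actions by bracket are compatible, that $\psi_G$ and $\alpha_{\frak{uce}(G)}$ preserve them, and that $\alpha_{\frak{uce}(G)}$ is surjective. Your justification of the surjectivity of $\alpha_{\frak{uce}(G)}$ via generators is in fact slightly more explicit than the paper's.
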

\begin{proof}
First note that compatible Hom-actions of $(G, \alpha_G)$ on itself and of $(\frak{uce}(G), \alpha_{\frak{uce}(G)})$ on itself are given by the corresponding structural bracket.
 Since $\alpha_G$ is  surjective, then $\alpha_{\frak{uce}(G)}$ is surjective as well. The homomorphism $\psi_G : (\frak{uce}(G), \alpha_{\frak{uce}(G)}) \twoheadrightarrow (G, \alpha_G)$ preserves Hom-actions. Indeed,
\begin{align*}
\psi_G \left( ^{g_1 \star g_2}(g_1' \star g_2') \right) &= \psi_G \left[g_1 \star g_2, g_1' \star g_2'\right] = \psi_G \big( [g_1, g_2] \star  [g_1', g_2'] \big)  \\
&= \left[ [g_1, g_2],  [g_1', g_2'] \right] = \left[ \psi_G(g_1 \star g_2), \psi_G(g_1' \star g_2') \right] = ^{\psi_G(g_1 \star g_2)} \psi_G(g_1' \star g_2'). \end{align*}
Similarly, it can be checked that $\alpha_{\frak{uce}(G)}$ also preserves the Hom-actions. Then the assertion follows by applying Proposition \ref{exact-tensor-3} for the short exact sequence
\[
0\longrightarrow (H_2^{\alpha}(G), \alpha_{\frak{uce}(G)\mid})\longrightarrow(\frak{uce}(G), \alpha_{\frak{uce}(G)}) \overset{\psi_G }\longrightarrow (G, \alpha_G)\longrightarrow 0 \ .
\]
\end{proof}

\begin{Co}
For a  perfect Hom-Lie algebra $(G, \alpha_G)$ with surjective endomorphism $\alpha_G$, there is a central extension
\begin{equation} \label{exact seq}
0 \longrightarrow \eta(H)  \longrightarrow (\frak{uce}(G) \star \frak{uce}(G), \alpha) \overset{\psi_G \ast \psi_G} \longrightarrow (G \star G, \alpha_{G \star G}) \longrightarrow 0 \ .
\end{equation}
\end{Co}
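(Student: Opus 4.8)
The plan is to deduce the statement from the preceding Proposition together with the single extra observation that the image $\eta(H)$ is central in $(\frak{uce}(G)\star\frak{uce}(G),\alpha)$. The preceding Proposition already furnishes the exact sequence $(H,\alpha_H)\overset{\eta}{\longrightarrow}(\frak{uce}(G)\star\frak{uce}(G),\alpha)\overset{\psi_G\ast\psi_G}{\longrightarrow}(G\star G,\alpha_{G\star G})\longrightarrow 0$; exactness gives $\eta(H)=\Ker(\psi_G\ast\psi_G)$, and this is a Hom-ideal because $\psi_G\ast\psi_G$ is a homomorphism of Hom-Lie algebras, surjective by Proposition \ref{composition}. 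Hence the sequence (\ref{exact seq}) is short exact, and for it to be a central extension in the sense of the paper it remains only to show $\eta(H)\subseteq\Z(\frak{uce}(G)\star\frak{uce}(G))$.

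For this I would use that $(\frak{uce}(G),\alpha_{\frak{uce}(G)})\overset{\psi_G}{\twoheadrightarrow}(G,\alpha_G)$ is \emph{the} universal central extension of the perfect Hom-Lie algebra $(G,\alpha_G)$, so that $H_2^\alpha(G)=\Ker(\psi_G)\subseteq\Z(\frak{uce}(G))$; being a Hom-ideal, $H_2^\alpha(G)$ is moreover stable under $\alpha_{\frak{uce}(G)}$. Recall from the construction of $\eta$ via Proposition \ref{exact-tensor-3} that $\eta(m\star k,\,l\star n)=\eta_1(m\star k)+\alpha\circ\eta_2(l\star n)$, where $\eta_1$ is induced by the inclusion $H_2^\alpha(G)\hookrightarrow\frak{uce}(G)$ into the first tensor factor and $\eta_2$ by the same inclusion into the second; consequently $\eta(H)$ is spanned, as a vector space, by the elements $\eta_1(m\star k)$ with $m\in H_2^\alpha(G)\subseteq\frak{uce}(G)$ and by $\alpha\circ\eta_2(l\star n)=\alpha_{\frak{uce}(G)}(l)\star\alpha_{\frak{uce}(G)}(n)$ with $n\in H_2^\alpha(G)$. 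Since the mutual Hom-actions of $\frak{uce}(G)$ on itself are given by the structural bracket, the bracket formula of Definition \ref{Def} gives, for an arbitrary generator $g\star g'$ of $\frak{uce}(G)\star\frak{uce}(G)$, that $[\,\eta_1(m\star k),\,g\star g'\,]=-[k,m]\star[g,g']=0$ because $m$ is central in $\frak{uce}(G)$, and likewise $[\,\alpha_{\frak{uce}(G)}(l)\star\alpha_{\frak{uce}(G)}(n),\,g\star g'\,]=-[\alpha_{\frak{uce}(G)}(n),\alpha_{\frak{uce}(G)}(l)]\star[g,g']=0$ because $\alpha_{\frak{uce}(G)}(n)$ is again central. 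Thus every spanning element of $\eta(H)$ commutes with every generator of $\frak{uce}(G)\star\frak{uce}(G)$, and since the center is a subspace we get $\eta(H)\subseteq\Z(\frak{uce}(G)\star\frak{uce}(G))$, which completes the proof.

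I do not anticipate a genuine obstacle here: the argument is essentially the remark that $\Ker(\psi_G)$ is central, pushed through the non-abelian tensor product via the bracket formula of Definition \ref{Def}. The only care needed is the bookkeeping of the semidirect-product presentation of $H$ and of which of the two copies of $\frak{uce}(G)$ carries, in $\eta_1$ respectively $\eta_2$, the factor coming from $H_2^\alpha(G)$; the twist by $\alpha$ in the $\eta_2$-summand is harmless precisely because $H_2^\alpha(G)$ is $\alpha_{\frak{uce}(G)}$-invariant.
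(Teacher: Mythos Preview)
Your argument is correct and is precisely the verification the paper leaves implicit: the Corollary is stated without proof, immediately after the Proposition supplying the exact sequence, so what needs to be checked is exactly the centrality of $\eta(H)$, and your computation via the bracket formula of Definition \ref{Def} does this cleanly.

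One minor streamlining: once you have observed that the spanning elements of $\eta(H)$ are of the form $m\star k$ with $m\in H_2^\alpha(G)\subseteq\Z(\frak{uce}(G))$, and $\alpha_{\frak{uce}(G)}(l)\star\alpha_{\frak{uce}(G)}(n)$ with $\alpha_{\frak{uce}(G)}(n)\in\Z(\frak{uce}(G))$, you may simply note that these all lie in $\Ker(\psi_{\frak{uce}(G)})$, where $\psi_{\frak{uce}(G)}:\frak{uce}(G)\star\frak{uce}(G)\to\frak{uce}(G)$ is the homomorphism $x\star y\mapsto [x,y]$ of Proposition \ref{action-on-tensor} {\it a)}; then Proposition \ref{action-on-tensor} {\it c)} gives $\eta(H)\subseteq\Ker(\psi_{\frak{uce}(G)})\subseteq\Z(\frak{uce}(G)\star\frak{uce}(G))$ directly. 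This replaces your explicit bracket computation by a citation, but the content is identical.
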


\begin{Th} \label{Th 5.5}
For any perfect Hom-Lie algebra $(G, \alpha_G)$ with surjective endomorphism $\alpha_G$, there is a surjective homomorphism of Hom-Lie algebras
\[
\omega :  (\frak{uce}(G \star G), \alpha_{\frak{uce}{(G \star G)}}) \twoheadrightarrow (\frak{uce}(G) \star \frak{uce}(G), \alpha) \ .
\]
\end{Th}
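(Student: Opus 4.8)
The plan is to realise $\omega$ as the canonical morphism out of a universal central extension and then to upgrade it to an epimorphism using perfectness. First I would record that $(G\star G,\alpha_{G\star G})$ is perfect, which is Corollary \ref{corollary6.2}. Hence $(G\star G,\alpha_{G\star G})$ admits a universal central extension, and by the description recalled above it is precisely
\[
(\frak{uce}(G\star G),\alpha_{\frak{uce}(G\star G)})=\big((G\star G)\star(G\star G),\alpha\big)\overset{\psi_{G\star G}}{\twoheadrightarrow}(G\star G,\alpha_{G\star G}).
\]
Next, since $(\frak{uce}(G),\alpha_{\frak{uce}(G)})=(G\star G,\alpha_{G\star G})$ is itself perfect, a second application of Corollary \ref{corollary6.2} (to the Hom-Lie algebra $\frak{uce}(G)$ acting on itself by the structural bracket) shows that $(\frak{uce}(G)\star\frak{uce}(G),\alpha)$ is perfect as well.

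Then I would feed the central extension $(\ref{exact seq})$, namely
\[
0\longrightarrow\eta(H)\longrightarrow(\frak{uce}(G)\star\frak{uce}(G),\alpha)\overset{\psi_G\ast\psi_G}{\longrightarrow}(G\star G,\alpha_{G\star G})\longrightarrow 0,
\]
into the universal property of $\psi_{G\star G}$. This yields a unique homomorphism of Hom-Lie algebras
\[
\omega:(\frak{uce}(G\star G),\alpha_{\frak{uce}(G\star G)})\longrightarrow(\frak{uce}(G)\star\frak{uce}(G),\alpha),\qquad (\psi_G\ast\psi_G)\circ\omega=\psi_{G\star G}.
\]

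It then remains to check surjectivity, which is the only step requiring a genuine argument. Put $E=\frak{uce}(G)\star\frak{uce}(G)$ and $\pi=\psi_G\ast\psi_G$, so that $\pi$ is onto and $\Ker(\pi)=\eta(H)\subseteq\Z(E)$. Given $e\in E$, pick $t\in\frak{uce}(G\star G)$ with $\psi_{G\star G}(t)=\pi(e)$; then $\pi(\omega(t))=\pi(e)$, so $e-\omega(t)\in\Ker(\pi)\subseteq\Z(E)$, whence $E=\im(\omega)+\Z(E)$ as vector spaces. Expanding the bracket by bilinearity gives $[E,E]=[\im(\omega),\im(\omega)]\subseteq\im(\omega)$, since $\im(\omega)$ is a Hom-subalgebra of $E$; as $E$ is perfect by the previous paragraph, $E=[E,E]\subseteq\im(\omega)$, so $\omega$ is surjective.

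The only real obstacle is the bookkeeping in the first paragraph: one must be sure that the universal central extension of $G\star G$ exists and coincides with its non-abelian tensor square — this is exactly where perfectness of $G$, hence of $G\star G$, is used — and that $(\frak{uce}(G)\star\frak{uce}(G),\alpha)$ is perfect, so that the ``image plus centre'' computation closes. The surjectivity hypothesis on $\alpha_G$ enters only through its role in $(\ref{exact seq})$. After that, everything is formal.
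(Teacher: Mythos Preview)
Your argument is correct and follows the same route as the paper: use perfectness of $G$ to deduce perfectness of $G\star G$, invoke the universal central extension of $G\star G$, map to the central extension $(\ref{exact seq})$ via the universal property, and then argue surjectivity. The paper dismisses the last step with ``it is easy to show that $\omega$ is a surjective homomorphism''; your $\im(\omega)+\Z(E)=E$ argument, combined with the perfectness of $\frak{uce}(G)\star\frak{uce}(G)$, is precisely the standard way to unpack that claim.
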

{\begin{proof}
Since $(G, \alpha_G)$ is perfect, then so is $(G \star G, \alpha_{G \star G})$ by Proposition \ref{perfect}. Hence it admits the following universal central extension
(see \cite[Theorem 4.11 {\it c)} and {\it d)}]{CaInPa})
\[
0\longrightarrow (H^{\alpha}_2(G \star G), \alpha_{\frak{uce}(G \star G)\mid})\longrightarrow (\frak{uce}(G \star G), \alpha_{\frak{uce}(G \star G)})\overset{\psi_{G \ast G}}\longrightarrow (G \star G, \alpha_{G \star G}) \longrightarrow 0.
\]

Now consider the central extension (\ref{exact seq}), then there is a unique  homomorphism of Hom-Lie algebras $\omega:(\frak{uce}(G \star G), \alpha_{\frak{uce}(G \star G)})\rightarrow (\frak{uce}(G) \star \frak{uce}(G), \alpha)$,  making commutative the following diagram:
\[
\xymatrix{
0\ar[r] & (H^{\alpha}_2(G \star G), \alpha_{\frak{uce}(G \star G)\mid})\ar[r] \ar@{-->}[d]^{\omega_\mid} & (\frak{uce}(G \star G), \alpha_{\frak{uce}(G \star G)})\ar[r]^{~~~~\psi_{G \ast G}} \ar@{-->}[d]^{\omega} & (G \star G, \alpha_{G \star G}) \ar[r] \ar@{=}[d] & 0 \\
0 \ar[r] & \eta(H)  \ar[r] & (\frak{uce}(G) \star \frak{uce}(G), \alpha) \ar[r]^{~~~~\psi_G \ast \psi_G}  & (G \star G, \alpha_{G \star G}) \ar[r] & 0
}
\]
It is easy to show that $\omega$ is  a surjective homomorphism.
\end{proof}

\begin{Co}\label{cor 6.7}
Let $(G, \alpha_G)$ be a perfect Hom-Lie algebra with surjective endomorphism $\alpha_G$ and $H_2^{\alpha}(G \star G)= \im(\eta)$, then $$(\frak{uce}(G \star G), \alpha_{\frak{uce}(G \star G)}) \cong (\frak{uce}(G) \star \frak{uce}(G), \alpha).$$
\end{Co}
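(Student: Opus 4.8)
The plan is to read the conclusion off the commutative diagram with exact rows displayed in the proof of Theorem \ref{Th 5.5}, whose vertical arrows are the restriction $\omega_{\mid}\colon (H_2^{\alpha}(G\star G),\alpha_{\frak{uce}(G\star G)\mid})\to\eta(H)$, the comparison homomorphism $\omega\colon(\frak{uce}(G\star G),\alpha_{\frak{uce}(G\star G)})\to(\frak{uce}(G)\star\frak{uce}(G),\alpha)$, and the identity on $(G\star G,\alpha_{G\star G})$. By Theorem \ref{Th 5.5} the map $\omega$ is already surjective, so it suffices to prove that $\omega$ is injective and then to recall that a bijective homomorphism of Hom-Lie algebras is an isomorphism, its set-theoretic inverse automatically preserving the bracket and commuting with the $\alpha$ endomorphisms.

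First I would check that $\Ker(\omega)=\Ker(\omega_{\mid})$ and that $\omega_{\mid}$ is surjective. For the equality of kernels: if $x\in\frak{uce}(G\star G)$ satisfies $\omega(x)=0$, then $\psi_{G\ast G}(x)=(\psi_G\ast\psi_G)\big(\omega(x)\big)=0$ by commutativity of the right-hand square, hence $x\in\Ker(\psi_{G\ast G})=H_2^{\alpha}(G\star G)$, on which $\omega$ coincides with $\omega_{\mid}$; so $\Ker(\omega)=\Ker(\omega_{\mid})$. For surjectivity of $\omega_{\mid}$: given $y\in\eta(H)=\im(\eta)$, pick $x$ with $\omega(x)=y$ (possible since $\omega$ is onto); then $\psi_{G\ast G}(x)=(\psi_G\ast\psi_G)(y)=0$ because $y\in\Ker(\psi_G\ast\psi_G)=\eta(H)$, so $x\in H_2^{\alpha}(G\star G)$ and $\omega_{\mid}(x)=y$. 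Now the hypothesis $H_2^{\alpha}(G\star G)=\im(\eta)$ is exactly what promotes this surjection $\omega_{\mid}$ to an isomorphism, that is, forces $\Ker(\omega_{\mid})=0$; equivalently one may invoke the Five Lemma at this stage. Consequently $\Ker(\omega)=\Ker(\omega_{\mid})=0$, so $\omega$ is injective, hence bijective, hence the claimed isomorphism $(\frak{uce}(G\star G),\alpha_{\frak{uce}(G\star G)})\cong(\frak{uce}(G)\star\frak{uce}(G),\alpha)$.

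The one genuinely delicate point is the step in which the hypothesis is turned into the injectivity of $\omega_{\mid}$: one must read the equality $H_2^{\alpha}(G\star G)=\im(\eta)$ as being realised through the canonical comparison map $\omega_{\mid}$ --- which is always available and, as shown above, always surjective between these two kernels --- so that the hypothesis supplies precisely the missing injectivity. Everything else is a formal diagram chase performed on underlying vector spaces, and no new Hom-Lie identity has to be verified beyond those already established for Theorem \ref{Th 5.5}.
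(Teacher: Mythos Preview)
Your argument is correct and follows the same route as the paper: both use the commutative diagram from Theorem~\ref{Th 5.5} to deduce that the already surjective $\omega$ becomes injective under the stated hypothesis. The paper's proof is a single sentence (``$\omega$ is also injective under this additional condition''), and what you have written is simply the diagram chase that unpacks that sentence, together with an explicit flag on the one point the paper leaves implicit --- namely that the equality $H_2^{\alpha}(G\star G)=\im(\eta)$ must be read as an identification via $\omega_{\mid}$, since the two objects live in different ambient spaces.
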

\begin{proof}
The surjective homomorphism $\omega$ in Theorem \ref{Th 5.5} is also injective under this additional condition.
\end{proof}


\section*{Acknowledgements}

The first and second authors were supported by  Agencia Estatal de Investigación (Spain), grant MTM2016-79661-P (AEI/FEDER, UE, support included).


\end{document}